\newcommand{\Z}{{\mathbb Z}}
\newcommand{\R}{{\mathbb R}}
\newcommand{\E}{{\mathcal E}}
\newcommand{\lmd}{\Lambda^*/2\Lambda_0^*}
\newcommand{\Lmd}{\Lambda_0/2\Lambda}
\theoremstyle{plain}
\newtheorem{Thm}{Theorem}[section]
\newtheorem{Lem}[Thm]{Lemma}
\newtheorem{Prop}[Thm]{Proposition}
\theoremstyle{definition}
\newtheorem{Def}[Thm]{Definition}
\newtheorem{Ex}[Thm]{Example}
\newtheorem{Rem}{Remark}[section]
\title[Heisenberg action 
and external edge condition]
{Heisenberg action in skein theory \\ 
and external edge condition}
\author{Hajime Fujita}
\date{}
\keywords{TQFT; skein theory; Heisenberg action}
\subjclass[2000]{Primary 57R56; Secondary 57M25, 58D19, 05C25}
\address{Department of Mathematics, 
Gakushuin University
1-5-1, Mejiro, Toshimaku, Tokyo
171-8588, Japan} 
\email{hajime@math.gakushuin.ac.jp}
\begin{document}
\maketitle
\begin{abstract}
In this article we give an explicit description of the 
representation matrix of a Heisenberg type action 
constructed 
by Blanchet, Habegger, Masbaum and Vogel. 
We give the matrix 
in terms of a ribbon graph and its admissible colorings. 
We show that components of the representation matrix 
satisfies the {\it external edge condition}, 
which is a natural combinatorial/geometric condition 
for maps from the first homology of the graph. 
We give the explicit formula of the trace of the action 
in the case of surfaces with colored structure 
using the external edge condition, 
the Verlinde formula and elementary counting arguments. 
Our formula is a generalization of 
the results for a surface without colored structure, 
which are already known. 
\end{abstract}


\section{Introduction}

In \cite{BHMV}, Blanchet, Habegger, Masbaum and Vogel 
constructed a family of 
topological quantum field theories (TQFT) parameterized by 
non-negative integers $p$ 
as a functor from a two dimensional cobordism category
\footnote{The domain category considered in \cite{BHMV} 
is really that of surfaces with $p_1$-{\it structure}. 
In this article we do not need any $p_1$-structure 
and we omit it for simplicity.} 
to the 
category of modules.
They constructed a TQFT-module $V_p(C)=V_p(C,l,j')$ for 
each {\it surface with colored structure} $(C,l,j')$. 
A surface with colored structure is a triple $(C,l,j')$ consisting 
of a closed oriented surface $C$, a banded link 
$l=l_1\cup\cdots\cup l_n$ in $C$ (i.e. embedded disjoint intervals) 
and a set of fixed colorings $j'=(j'_1,\cdots,j_n')$ 
of components $l_1,\cdots,l_n$. 
Here a coloring is an integer. 
On the other hand, for each {\it admissible coloring} 
of a ribbon graph (Definition~\ref{QCG}), 
one can associate an element of the TQFT-module. 
They also showed that such elements form a free basis of their TQFT-module. 
Our interest in this article is the Heisenberg type action 
on the TQFT-module for $p=2k+4$. 
They defined involutions on the TQFT-module $V_{2k+4}(C)$ 
associated with simple closed curves on the surface, and  
these involutions form a natural action of 
a Heisenberg type group defined as a 
central extension of $H_1(C-l;\Z/2)$. 
Such a Heisenberg action is a starting point of a 
refinement of their TQFT, which is called 
{\it spin-refined TQFT} (\cite{B,BM}).

The purpose of this article is 
to have an explicit representation matrix of the Heisenberg action 
on the TQFT-module in terms of a given 
ribbon graph and its $k$-admissible colorings 
based on combinatorial part of Yoshida's construction in \cite{TY},
and we carry out the computation of the trace of the action 
for a surface with colored structure 
using such an explicit description. 
The main ingredient for the computation of the trace is 
the {\it external edge condition}, 
which is a quite natural geometric/combinatorial condition 
for a map from the homology of the graph to the coefficient ring 
of the TQFT-module. 
The explicit description of the representation matrix 
tells us that the map appeared in the matrix 
satisfies the external edge condition.  
As a corollary of the computation of the trace 
we have the dimension formula 
for the {\it brick decomposition} of the TQFT-module 
for a surface with colored structure. 
The formula for a surface without banded link 
are obtained in \cite{AndMas} and \cite{BHMV} 
using algebro-geometric and skein theoretical technique respectively. 
Our explicit computation works uniformly 
for a surface with non-empty link. 
The computation of the trace also tells us 
that the external edge condition is a characterization 
of the Heisenberg action on the TQFT-module (Remark~\ref{characterization}).

We should note that 
Masbaum gave the computation of the representation matrix 
a long time ago but he never published it. 
Such a computation is a 
special case of the algorithm called {\it TQFT} 
that implemented by 
Masbaum and A'Campo in PARI program to compute representation 
matrices of the mapping class group on the TQFT-module. 
It is possible to compute
representation matrices of the mapping class group 
using similar methods. 
See \cite{A'Campo} for the program TQFT and its applications. 

This article is organized as follows. 
In Section~2 we recall some results obtained in \cite{MV}, 
which we use in Section~3 to compute the representation matrix explicitly. 
In Section~3 we first recall the definition 
of the involutions on the TQFT-modules. 
We give explicit descriptions of 
involutions on the TQFT-module associated with 
a meridian cycle and a longitude cycle. 
In Subsection~3.1 we introduce the external edge condition
(Definition~\ref{defextedgecond}). 
By making use of 
the explicit description one can check that the map appeared in the matrix 
satisfies the external edge condition 
(Proposition~\ref{skeinextedge}). 
In Section~4 we define four lattices associated with the graph 
and realize the first homology group of the surface 
in terms of these lattices (Proposition~\ref{H1}). 
These lattices are introduced in \cite{TY} 
for a closed surface with a pants decomposition. 
In Section~5 we first recall the definition of the Heisenberg type group 
in \cite{BHMV}, and 
the explicit description obtained in Section~3 enable us to describe 
the action of the Heisenberg type group explicitly in terms of the graph 
and its $k$-admissible colorings. 
In Section~6 we compute the trace of the involution 
for a surface with colored structure (Theorem~\ref{trace}). 
To show the formula 
we use the external edge condition, the Verlinde formula 
and elementary counting arguments. 
In Section~7 we demonstrate 
the computation of the dimension formulas 
for a brick decomposition of the TQFT-module 
for a surface with colored structure. 


{\it Convention.} 
In the figures, we use the convention 
that a line represents a ribbon parallel to the plane 
and each trivalent vertex is ordered 
in the counterclockwise direction. 

\medskip

\begin{figure}[htb]
\centering
\unitlength 0.1in
\begin{picture}( 18.1500,  6.6000)( 14.0400,-20.6000)
%
\special{pn 13}%
\special{pa 1508 1400}%
\special{pa 1508 1708}%
\special{fp}%
%
\special{pn 13}%
\special{pa 1508 1708}%
\special{pa 1200 2014}%
\special{fp}%
%
\special{pn 13}%
\special{pa 1508 1708}%
\special{pa 1814 2014}%
\special{fp}%
%
\special{pn 8}%
\special{pa 2838 1400}%
\special{pa 2838 1708}%
\special{dt 0.045}%
%
\special{pn 8}%
\special{pa 2838 1708}%
\special{pa 2530 2014}%
\special{dt 0.045}%
%
\special{pn 8}%
\special{pa 2838 1708}%
\special{pa 3144 2014}%
\special{dt 0.045}%
%
\special{pn 13}%
\special{ar 2142 1406 594 586  6.2831853 6.2831853}%
\special{ar 2142 1406 594 586  0.0000000 0.9825251}%
%
\special{pn 13}%
\special{ar 2858 2270 322 312  3.8789317 5.5052608}%
%
\special{pn 13}%
\special{ar 3550 1410 594 586  2.1593354 3.1415927}%
%
\special{pn 8}%
\special{ar 1508 1708 104 104  1.1976482 6.2831853}%
\special{ar 1508 1708 104 104  0.0000000 0.4272399}%
%
\special{pn 8}%
\special{pa 1538 1806}%
\special{pa 1544 1804}%
\special{fp}%
\special{sh 1}%
\special{pa 1544 1804}%
\special{pa 1476 1816}%
\special{pa 1496 1828}%
\special{pa 1494 1852}%
\special{pa 1544 1804}%
\special{fp}%
%
\special{pn 13}%
\special{ar 2920 2140 186 186  4.2487414 4.7021329}%
%
\special{pn 13}%
\special{pa 2904 1956}%
\special{pa 2918 1956}%
\special{fp}%
\special{sh 1}%
\special{pa 2918 1956}%
\special{pa 2850 1940}%
\special{pa 2866 1960}%
\special{pa 2854 1980}%
\special{pa 2918 1956}%
\special{fp}%
%
\special{pn 13}%
\special{ar 3186 1584 184 184  2.3662952 2.8198421}%
%
\special{pn 13}%
\special{pa 3016 1656}%
\special{pa 3010 1642}%
\special{fp}%
\special{sh 1}%
\special{pa 3010 1642}%
\special{pa 3018 1710}%
\special{pa 3032 1690}%
\special{pa 3056 1694}%
\special{pa 3010 1642}%
\special{fp}%
%
\special{pn 13}%
\special{ar 2484 1620 184 184  0.2709469 0.7236585}%
%
\special{pn 13}%
\special{pa 2632 1730}%
\special{pa 2622 1742}%
\special{fp}%
\special{sh 1}%
\special{pa 2622 1742}%
\special{pa 2680 1704}%
\special{pa 2656 1700}%
\special{pa 2648 1678}%
\special{pa 2622 1742}%
\special{fp}%
%
\special{pn 13}%
\special{pa 1920 1790}%
\special{pa 2350 1790}%
\special{pa 2350 1790}%
\special{pa 2350 1790}%
\special{fp}%
%
\special{pn 13}%
\special{pa 1950 1790}%
\special{pa 1920 1790}%
\special{fp}%
\special{sh 1}%
\special{pa 1920 1790}%
\special{pa 1988 1810}%
\special{pa 1974 1790}%
\special{pa 1988 1770}%
\special{pa 1920 1790}%
\special{fp}%
%
\special{pn 13}%
\special{pa 2322 1790}%
\special{pa 2350 1790}%
\special{fp}%
\special{sh 1}%
\special{pa 2350 1790}%
\special{pa 2284 1770}%
\special{pa 2298 1790}%
\special{pa 2284 1810}%
\special{pa 2350 1790}%
\special{fp}%
\end{picture}%
\caption{Convention of orientation}
\end{figure}
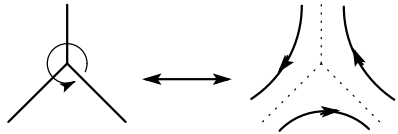
\section{Some results in the Temperley-Lieb algebra} 

In this section we recall some results obtained 
in \cite{MV}, which are used in the next section to 
obtain the explicit description of 
involutions on the TQFT-module. 
Fix a non-negative integer $k$. 
We first recall the definition of 
$k$-{\it admissible colorings} of a graph. 

\begin{Def}\label{QCG}
Let $\Gamma$ be a trivalent graph. 
A map $j$ from the set of edges of $\Gamma$ to 
the set of integers $\{0,1,\cdots,k\}$ 
is called a $k$-admissible coloring of $\Gamma$ if 
the following three conditions hold at each vertex with 
edges $f_1$, $f_2$ and $f_3$; 

$$
\left\{\begin{array}{lll} 
j(f_1)+j(f_2)+j(f_3)\in 2\Z \\ 
|j(f_1)-j(f_2)|\leq j(f_3)\leq j(f_1)+j(f_2) \\ 
j(f_1)+j(f_2)+j(f_3)\leq 2k. 
\end{array}\right. 
$$
For a vertex with 
two edges $f_{1}$ and $f_{2}=f_{3}$, 
we interpret these conditions as a
corresponding condition with $j(f_2)=j(f_3)$. 
These three conditions are called the 
{\it quantum Clebsh-Gordon condition of level $k$}. 

For a unitrivalent graph $\Gamma$ with $n$ univalent vertices, 
we fix an $n$-tuple of colorings $j'=(j_1',\cdots,j_n')$ 
(boundary coloring). 
A coloring $j$ of $\Gamma$ is $k$-admissible if it satisfies the 
quantum Clebsh-Gordon condition of level $k$ at each trivalent vertex and 
it is compatible with $j'$, 
i.e, $j(f_l)=j_l'$ for all $f_l$ with a univalent vertex. 
\end{Def}

The {\it Jones-Kauffman skein module} 
of a compact 3-dimensional manifold $M$ over a commutative ring $R$ 
is defined as the $R$-module generated by 
the set of isotopy classes of banded links in $M$ 
meeting $\partial M$ transversally in the banded link in $\partial M$ 
quotiented by the Kauffman relations. 
A colored ribbon graph with a $k$-admissible coloring 
in a compact 3-dimensional manifold 
gives rise to an element in the Jones-Kauffman skein module 
by expanding the graph at each vertex and inserting 
the Jones-Wenzl idempotent at each edge. 
See \cite{BHMV} for detail. 
The coefficient ring of TQFT-module in \cite{BHMV} is defined as a 
kind of cyclotomic ring of degree $2(2k+4)$. 
It contains an indeterminate $A$ 
which should be a $2(2k+4)$-th root of unity. 
Recall the following notations. 
\begin{itemize}
\item
$\displaystyle[n]:=\frac{A^{2n}-A^{-2n}}{A^2-A^{-2}}$. 

\medskip

\item
$[n]!:=[1][2]\cdots[n], \ ([0]!:=1)$.

\medskip

\item
$\langle n \rangle:=(-1)^n[n+1]$.
\end{itemize}
It is known that $[n]$ is invertible for $n=1,2,\cdots, k+1$ and 
$[k+2]=0$. 

Now we recall the results in \cite{MV}, 
which we will use in the next section. 
Those are equalities in the {\it Temperley-Lieb algebra}. 
The Temperley-Lieb algebra is 
the Jones-Kauffman skein module 
of the box $D^2\times [0,1]$ 
with the standard link in $D^2\times \{0\}$ and 
$D^2\times \{1\}$. 
The product is given by the standard product of tangles. 

\medskip

\noindent
{\bf Theorem~1.} [Fusion coefficient]
{\it Put
$$
\langle a,b,c\rangle=(-1)^{i+j+k}
\frac{[i+j+k+1]![i]![j]![k]!}{[i+j]![j+k]![k+i]!},
$$where $i, j$ and $k$ (internal colors) 
are defined by  
\begin{eqnarray*}
i&=&\frac{1}{2}(-a+b+c)\\
j&=&\frac{1}{2}(a-b+c)\\
k&=&\frac{1}{2}(a+b-c). 
\end{eqnarray*} 
Then one has 
the following two equalities in the 
Temperley-Lieb algebra;

\begin{center}
\unitlength 0.1in
\begin{picture}( 22.1200,  8.9000)( 11.6000,-14.6000)
%
\special{pn 8}%
\special{pa 1200 768}%
\special{pa 1200 1376}%
\special{fp}%
%
\special{pn 8}%
\special{pa 1512 768}%
\special{pa 1512 1376}%
\special{fp}%
\put(11.6000,-15.7600){\makebox(0,0)[lb]{$a$}}%
\put(14.8000,-15.7600){\makebox(0,0)[lb]{$b$}}%
\put(17.8400,-11.5200){\makebox(0,0)[lb]{$=$}}%
%
\special{pn 8}%
\special{pa 2780 764}%
\special{pa 3076 932}%
\special{fp}%
%
\special{pn 8}%
\special{pa 3372 764}%
\special{pa 3076 932}%
\special{fp}%
%
\special{pn 8}%
\special{pa 2780 1460}%
\special{pa 3076 1292}%
\special{fp}%
%
\special{pn 8}%
\special{pa 3372 1460}%
\special{pa 3076 1292}%
\special{fp}%
%
\special{pn 8}%
\special{pa 3076 924}%
\special{pa 3076 1292}%
\special{fp}%
\put(27.4000,-15.8000){\makebox(0,0)[lb]{$a$}}%
\put(33.0800,-15.8000){\makebox(0,0)[lb]{$b$}}%
\put(20.4800,-13.1200){\makebox(0,0)[lb]{$\displaystyle\sum_{c}\frac{\langle c\rangle}{\langle a,b,c\rangle}$}}%
\put(27.8000,-7.4000){\makebox(0,0)[lb]{$a$}}%
\put(33.4800,-7.4000){\makebox(0,0)[lb]{$b$}}%
\put(31.4800,-11.7200){\makebox(0,0)[lb]{$c$}}%
\end{picture}
\end{center}

\medskip

\begin{center}
\unitlength 0.1in
\begin{picture}( 19.2600, 10.9000)(  9.8000,-17.0000)
%
\special{pn 8}%
\special{ar 1320 1190 150 264  3.1415927 6.2831853}%
\special{ar 1320 1190 150 264  0.0000000 3.1299200}%
%
\special{pn 8}%
\special{pa 1320 1680}%
\special{pa 1320 1456}%
\special{fp}%
%
\special{pn 8}%
\special{pa 1316 922}%
\special{pa 1316 698}%
\special{fp}%
\put(13.9600,-7.8000){\makebox(0,0)[lb]{$c$}}%
\put(13.9000,-16.7000){\makebox(0,0)[lb]{$c'$}}%
\put(15.0200,-12.9800){\makebox(0,0)[lb]{$b$}}%
\put(9.8000,-12.9000){\makebox(0,0)[lb]{$a$}}%
\put(18.2200,-13.1700){\makebox(0,0)[lb]{$=$}}%
\put(19.9000,-13.7000){\makebox(0,0)[lb]{$\delta_{c,c'}\frac{\langle a, \ b, \ c\rangle}{\langle c\rangle}$}}%
%
\special{pn 8}%
\special{pa 2810 676}%
\special{pa 2810 1700}%
\special{fp}%
\put(29.0600,-7.9000){\makebox(0,0)[lb]{$c$}}%
\end{picture}
\end{center}
}

\noindent
{\bf Theorem~2.} [Tetrahedron coefficient]
{\it Put 
$$
\left<\begin{array}{ccc}
a & b &c\\ 
d & e & f
\end{array}\right>=
\frac{\prod_{i=1}^3\prod_{j=1}^4[b_i-a_j]!}
{[a]![b]![c]![d]![e]![f]!}
\left(\begin{array}{cccc}
a_1 & a_2 & a_3 & a_4 \\
b_1 & b_2 & b_3 
\end{array}
\right), 
$$where 
$$
\left(\begin{array}{cccc}
a_1 & a_2 & a_3 & a_4 \\
b_1 & b_2 & b_3 
\end{array}
\right)=
\sum_{\max(a_j)\leq \zeta\leq\min(b_i)}
\frac{(-1)^{\zeta}[\zeta+1]!}{\prod_{i=1}^3[b_i-\zeta]!\prod_{j=1}^4[\zeta-a_j]!}
$$for 
\begin{eqnarray*}
a_1&=&\frac{1}{2}(a+b+c), \quad b_1=\frac{1}{2}(b+c+e+f) \\
a_2&=&\frac{1}{2}(b+d+f), \quad b_2=\frac{1}{2}(a+b+d+e) \\
a_3&=&\frac{1}{2}(c+d+e), \quad b_3=\frac{1}{2}(a+c+d+f) \\
a_4&=&\frac{1}{2}(a+e+f). 
\end{eqnarray*}
Then one has 
the following equality in the 
Temperley-Lieb algebra; 
\begin{center}
\unitlength 0.1in
\begin{picture}( 29.8000,  9.2600)(  8.2000,-16.6600)
%
\special{pn 8}%
\special{pa 1076 1410}%
\special{pa 820 1666}%
\special{fp}%
%
\special{pn 8}%
\special{pa 1588 1410}%
\special{pa 1844 1666}%
\special{fp}%
%
\special{pn 8}%
\special{pa 1332 1026}%
\special{pa 1332 770}%
\special{fp}%
%
\special{pn 8}%
\special{pa 1332 1026}%
\special{pa 1076 1410}%
\special{fp}%
\special{pa 1588 1410}%
\special{pa 1332 1026}%
\special{fp}%
%
\special{pn 8}%
\special{pa 1070 1410}%
\special{pa 1582 1410}%
\special{fp}%
\put(13.7000,-9.1000){\makebox(0,0)[lb]{$a$}}%
\put(10.7000,-12.1000){\makebox(0,0)[lb]{$f$}}%
\put(15.0000,-11.9000){\makebox(0,0)[lb]{$e$}}%
\put(17.9200,-15.6000){\makebox(0,0)[lb]{$c$}}%
\put(12.8000,-15.9000){\makebox(0,0)[lb]{$d$}}%
\put(8.3000,-15.3000){\makebox(0,0)[lb]{$b$}}%
\put(20.4000,-12.4000){\makebox(0,0)[lb]{$=$}}%
%
\special{pn 8}%
\special{pa 3480 1240}%
\special{pa 3480 920}%
\special{fp}%
\special{pa 3160 1560}%
\special{pa 3480 1240}%
\special{fp}%
\special{pa 3480 1240}%
\special{pa 3800 1560}%
\special{fp}%
\put(35.7000,-10.1000){\makebox(0,0)[lb]{$a$}}%
\put(38.0000,-14.0000){\makebox(0,0)[lb]{$c$}}%
\put(31.6000,-14.0000){\makebox(0,0)[lb]{$b$}}%
\put(23.0000,-13.6000){\makebox(0,0)[lb]{$\frac{\left<\begin{array}{ccc}a & b &c\\ d & e & f\end{array}\right>}{\displaystyle{\langle a \ b \ c\rangle}}$}}%
\end{picture}%
\end{center}
}

\noindent
{\bf Theorem~3.} [Half twist coefficient]
{\it Put
$$
\delta(c;a,b)=(-1)^kA^{ij-k(i+j+k+2)}, 
$$where $i$, $j$ and $k$ are as in Theorem~1. 
Then one has 
the following equality in the 
Temperley-Lieb algebra; 
\begin{center}
\unitlength 0.1in
\begin{picture}( 18.4000,  8.9000)(  8.1000,-15.9000)
%
\special{pn 8}%
\special{pa 1104 1270}%
\special{pa 1104 1590}%
\special{fp}%
\special{pa 1104 1590}%
\special{pa 1104 1590}%
\special{fp}%
%
\special{pn 8}%
\special{ar 1104 1270 0 0  5.3003916 6.2831853}%
\special{ar 1104 1270 0 0  0.0000000 3.9013554}%
\put(14.2400,-12.7000){\makebox(0,0)[lb]{$= \ \delta(c;a,b)$ }}%
%
\special{pn 8}%
\special{pa 2384 1590}%
\special{pa 2384 1270}%
\special{fp}%
\put(11.7600,-15.9000){\makebox(0,0)[lb]{$c$}}%
\put(8.1000,-8.7000){\makebox(0,0)[lb]{$a$}}%
\put(13.4800,-8.7000){\makebox(0,0)[lb]{$b$}}%
\put(26.3000,-9.0000){\makebox(0,0)[lb]{$b$}}%
\put(21.0000,-9.0000){\makebox(0,0)[lb]{$a$}}%
\put(24.5600,-15.9000){\makebox(0,0)[lb]{$c$}}%
%
\special{pn 8}%
\special{ar 900 1120 284 226  4.3916989 6.2831853}%
\special{ar 900 1120 284 226  0.0000000 0.7086263}%
%
\special{pn 8}%
\special{sh 0}%
\special{pa 1200 910}%
\special{pa 1040 910}%
\special{pa 1040 1070}%
\special{pa 1200 1070}%
\special{pa 1200 910}%
\special{ip}%
%
\special{pn 8}%
\special{ar 1320 1130 284 226  2.4329664 5.0330791}%
%
\special{pn 8}%
\special{pa 2380 1290}%
\special{pa 2110 950}%
\special{fp}%
%
\special{pn 8}%
\special{pa 2380 1290}%
\special{pa 2650 950}%
\special{fp}%
\end{picture}%
\end{center}
}

\section{Computation of representation matrices of the involutions}
In this section we give graphical computations 
for involutions on the TQFT-module 
and obtain the explicit description of them. 
The involution in \cite{BHMV} is defined as follows. 
For each simple closed curve $c$ on a oriented surface $C$, 
set $Z(c)_0:=(C\times [0,1], c_{1/2}(k))$, 
where $c_{1/2}(k)$ is a knot $c\times\{1/2\}$ in 
$C\times [0,1]$ colored by $k$. 
Then $Z(c)_0$ is a cobordism from $C$ to itself 
and it induces an endomorphism on the TQFT-module $V_{2k+4}(C)$. 
The involution in \cite{BHMV} is defined by $Z(c):=(-1)^kZ(c)_0$. 

Now we fix a ribbon unitrivalent graph $\Gamma$. 
Let $B_{\Gamma}$ be 
the product of the interval $[0,1]$ and the ribbon associated with $\Gamma$, 
which is homeomorphic to a handle body. 
Let $C_{\Gamma}$ be the boundary of $B_{\Gamma}$. 
Note that $C_{\Gamma}$ is a closed oriented surface 
equipped with the pants decomposition 
which is dual to the given graph $\Gamma$, 
and $C_{\Gamma}$ may contain a banded link 
corresponding to univalent vertices of $\Gamma$. 
We also have a copy $\Gamma_0=\{0\}\times\Gamma\subset C_{\Gamma}$ 
of $\Gamma$. 

\begin{figure}[htb]
\centering
\unitlength 0.1in
\begin{picture}( 36.7000,  9.5800)( 16.6000,-19.6800)
%
\special{pn 8}%
\special{ar 4754 1726 56 210  4.6892717 4.7798377}%
\special{ar 4754 1726 56 210  5.0515359 5.1421019}%
\special{ar 4754 1726 56 210  5.4138000 5.5043661}%
\special{ar 4754 1726 56 210  5.7760642 5.8666302}%
\special{ar 4754 1726 56 210  6.1383283 6.2288944}%
\special{ar 4754 1726 56 210  6.5005925 6.5911585}%
\special{ar 4754 1726 56 210  6.8628566 6.9534227}%
\special{ar 4754 1726 56 210  7.2251208 7.3156868}%
\special{ar 4754 1726 56 210  7.5873849 7.6779510}%
%
\special{pn 8}%
\special{ar 4760 1736 52 208  1.5707963 4.7436288}%
%
\special{pn 8}%
\special{ar 4066 1704 58 188  4.6957239 4.7936830}%
\special{ar 4066 1704 58 188  5.0875606 5.1855198}%
\special{ar 4066 1704 58 188  5.4793973 5.5773565}%
\special{ar 4066 1704 58 188  5.8712341 5.9691932}%
\special{ar 4066 1704 58 188  6.2630708 6.3610300}%
\special{ar 4066 1704 58 188  6.6549075 6.7528667}%
\special{ar 4066 1704 58 188  7.0467443 7.1447034}%
\special{ar 4066 1704 58 188  7.4385810 7.5365402}%
\special{ar 4066 1704 58 188  7.8304177 7.8539816}%
%
\special{pn 8}%
\special{ar 4072 1712 54 186  1.5707963 4.7301386}%
%
\special{pn 8}%
\special{pa 5316 1494}%
\special{pa 5298 1516}%
\special{pa 5268 1528}%
\special{pa 5238 1536}%
\special{pa 5206 1542}%
\special{pa 5174 1546}%
\special{pa 5142 1548}%
\special{pa 5110 1548}%
\special{pa 5078 1548}%
\special{pa 5046 1546}%
\special{pa 5014 1544}%
\special{pa 4984 1538}%
\special{pa 4952 1530}%
\special{pa 4922 1518}%
\special{pa 4900 1496}%
\special{pa 4900 1494}%
\special{sp -0.045}%
%
\special{pn 8}%
\special{pa 4898 1484}%
\special{pa 4918 1462}%
\special{pa 4948 1454}%
\special{pa 4980 1446}%
\special{pa 5012 1442}%
\special{pa 5044 1440}%
\special{pa 5076 1438}%
\special{pa 5106 1436}%
\special{pa 5138 1438}%
\special{pa 5170 1440}%
\special{pa 5202 1442}%
\special{pa 5234 1448}%
\special{pa 5264 1456}%
\special{pa 5294 1468}%
\special{pa 5310 1486}%
\special{sp}%
%
\special{pn 8}%
\special{ar 4420 1484 60 474  4.7123890 4.7574171}%
\special{ar 4420 1484 60 474  4.8925016 4.9375297}%
\special{ar 4420 1484 60 474  5.0726141 5.1176423}%
\special{ar 4420 1484 60 474  5.2527267 5.2977548}%
\special{ar 4420 1484 60 474  5.4328393 5.4778674}%
\special{ar 4420 1484 60 474  5.6129518 5.6579800}%
\special{ar 4420 1484 60 474  5.7930644 5.8380925}%
\special{ar 4420 1484 60 474  5.9731770 6.0182051}%
\special{ar 4420 1484 60 474  6.1532895 6.1983177}%
\special{ar 4420 1484 60 474  6.3334021 6.3784303}%
\special{ar 4420 1484 60 474  6.5135147 6.5585428}%
\special{ar 4420 1484 60 474  6.6936273 6.7386554}%
\special{ar 4420 1484 60 474  6.8737398 6.9187680}%
\special{ar 4420 1484 60 474  7.0538524 7.0988805}%
\special{ar 4420 1484 60 474  7.2339650 7.2789931}%
\special{ar 4420 1484 60 474  7.4140775 7.4591057}%
\special{ar 4420 1484 60 474  7.5941901 7.6392182}%
\special{ar 4420 1484 60 474  7.7743027 7.8193308}%
%
\special{pn 8}%
\special{ar 4420 1490 62 474  1.5707963 4.7123890}%
%
\special{pn 8}%
\special{ar 4072 1246 66 174  4.6784215 4.7788399}%
\special{ar 4072 1246 66 174  5.0800951 5.1805135}%
\special{ar 4072 1246 66 174  5.4817687 5.5821872}%
\special{ar 4072 1246 66 174  5.8834424 5.9838608}%
\special{ar 4072 1246 66 174  6.2851160 6.3855344}%
\special{ar 4072 1246 66 174  6.6867897 6.7872081}%
\special{ar 4072 1246 66 174  7.0884633 7.1888817}%
\special{ar 4072 1246 66 174  7.4901369 7.5905554}%
%
\special{pn 8}%
\special{ar 4078 1254 62 174  1.5707963 4.7386987}%
%
\special{pn 13}%
\special{ar 4084 1504 120 90  3.4552133 5.9667539}%
%
\special{pn 13}%
\special{ar 4084 1396 168 124  0.3083157 2.8303435}%
%
\special{pn 13}%
\special{ar 4770 1514 120 88  3.4472156 5.9859530}%
%
\special{pn 13}%
\special{ar 4770 1404 166 124  0.3140879 2.8279720}%
%
\special{pn 8}%
\special{ar 4764 1232 50 190  4.6860793 4.7869196}%
\special{ar 4764 1232 50 190  5.0894406 5.1902809}%
\special{ar 4764 1232 50 190  5.4928020 5.5936423}%
\special{ar 4764 1232 50 190  5.8961633 5.9970036}%
\special{ar 4764 1232 50 190  6.2995246 6.4003650}%
\special{ar 4764 1232 50 190  6.7028860 6.8037263}%
\special{ar 4764 1232 50 190  7.1062473 7.2070877}%
\special{ar 4764 1232 50 190  7.5096087 7.6104490}%
%
\special{pn 8}%
\special{ar 4766 1240 48 188  1.5707963 4.7521032}%
%
\special{pn 8}%
\special{ar 4072 1460 260 254  0.0000000 6.2831853}%
%
\special{pn 8}%
\special{pa 4340 1482}%
\special{pa 4540 1482}%
\special{fp}%
%
\special{pn 8}%
\special{ar 4774 1470 232 264  0.0000000 6.2831853}%
%
\special{pn 13}%
\special{pa 2800 1604}%
\special{pa 2976 1712}%
\special{fp}%
%
\special{pn 13}%
\special{pa 2812 1244}%
\special{pa 2976 1060}%
\special{fp}%
%
\special{pn 13}%
\special{ar 2156 1394 190 254  0.0000000 6.2831853}%
%
\special{pn 13}%
\special{pa 1660 1420}%
\special{pa 1970 1420}%
\special{fp}%
%
\special{pn 13}%
\special{pa 2350 1416}%
\special{pa 2478 1416}%
\special{fp}%
%
\special{pn 13}%
\special{ar 2670 1416 192 256  0.0000000 6.2831853}%
%
\special{pn 13}%
\special{pa 3076 1436}%
\special{pa 3436 1436}%
\special{pa 3436 1436}%
\special{pa 3436 1436}%
\special{fp}%
%
\special{pn 13}%
\special{pa 3100 1436}%
\special{pa 3076 1436}%
\special{fp}%
\special{sh 1}%
\special{pa 3076 1436}%
\special{pa 3144 1456}%
\special{pa 3130 1436}%
\special{pa 3144 1416}%
\special{pa 3076 1436}%
\special{fp}%
%
\special{pn 13}%
\special{pa 3412 1436}%
\special{pa 3436 1436}%
\special{fp}%
\special{sh 1}%
\special{pa 3436 1436}%
\special{pa 3370 1416}%
\special{pa 3384 1436}%
\special{pa 3370 1456}%
\special{pa 3436 1436}%
\special{fp}%
%
\special{pn 13}%
\special{ar 4496 1490 836 478  0.0100780 6.2831853}%
%
\special{pn 8}%
\special{ar 5148 1392 210 68  3.7944030 5.3331626}%
%
\special{pn 8}%
\special{pa 3668 1442}%
\special{pa 3684 1416}%
\special{pa 3710 1400}%
\special{pa 3740 1390}%
\special{pa 3772 1388}%
\special{pa 3804 1392}%
\special{pa 3826 1402}%
\special{sp}%
%
\special{pn 8}%
\special{pa 4946 1632}%
\special{pa 4978 1636}%
\special{pa 5010 1642}%
\special{pa 5040 1650}%
\special{pa 5072 1658}%
\special{pa 5102 1666}%
\special{pa 5132 1678}%
\special{pa 5160 1690}%
\special{pa 5190 1702}%
\special{pa 5218 1718}%
\special{pa 5234 1724}%
\special{sp}%
\end{picture}%
\caption{Ribbon graph and surface with a pants decomposition}
\end{figure}
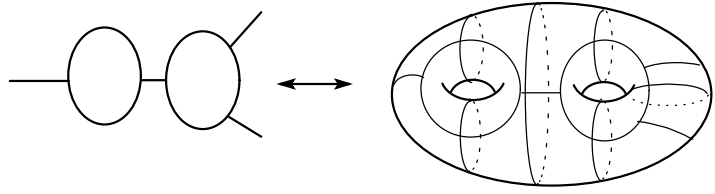

Fix a boundary coloring $j'$ of $\Gamma$. 
For each $k$-admissible coloring $j$ of $\Gamma$, 
a triple $(B_{\Gamma},\Gamma,j)$ defines an element 
of the TQFT-module $V_{2k+4}(C_{\Gamma})=V_{2k+4}(C_{\Gamma},j')$. 
In \cite{BHMV} they showed that the elements 
$\{(B_{\Gamma},\Gamma,j) \ | \ j: k{\rm -admissble}\}$ form a
free basis of $V_{2k+4}(C_{\Gamma})$. 
In this setting 
the involution explained above is described as follows; 
$$
Z(c): (B_{\Gamma},\Gamma,j)\mapsto 
(-1)^k(B_{\Gamma}\cup(C_{\Gamma}\times[0,1]), \Gamma\sqcup c, j\sqcup k), 
$$
where $c$ is a simple closed curve on $C_{\Gamma}$ and 
$j$ is a $k$-admissible coloring of $\Gamma$. 
In this section we have an explicit description of this involution for 
a meridian cycle and a longitude cycle. 
Here a meridian cycle means an element in the kernel of the 
natural map $H_1(C_{\Gamma})\to H_1(B_{\Gamma})$ and a
longitude cycle means an element in $H_1(\Gamma_0)$. 
Before starting the computation 
we note the following equalities. 

\begin{itemize}
\item
$
[k+2-\alpha]=[\alpha]
$

\item
$[k-\alpha]![\alpha+1]!=[k+1]!$

\item
$
\displaystyle{\frac{\langle k-a \rangle}{\langle a\ k \ k-a\rangle}
=(-1)^{a}[a+1]=\langle a \rangle=(-1)^k\langle k-a\rangle}
$

\item
$\displaystyle{
\delta(k-a;k,a)=(-1)^aA^{-a(k+2)}}
$

\item
$\displaystyle{
\frac{\left<\begin{array}{ccc}
k-A & k-B &C\\ 
B & A & k
\end{array}\right>}{\langle k-A \ k-B \ C\rangle}
=(-1)^{\frac{A+B-C}{2}}
\frac{[k-A]![k-B]!\left[\frac{A+B+C}{2}+1\right]!}
{[k+1]!\left[k+1-\frac{A+B-C}{2}\right]!
}}
$
\end{itemize}
All these formulas are shown by the direct computations, 
and the left hand side of 
last three equalities appear in Theorem~1,2 and 3 
for a graph with an edge colored by $k$. 
We also note that a triple $(k,a,c)$ is $k$-admissible 
if and  only if $c=k-a$. 

Hereafter we denote an element $(B_{\Gamma},\Gamma,j)$ 
by $|j\rangle$ for simplicity. 

\vspace{0.5cm}

\noindent
{\bf Meridian.}
Take a meridian circle $\mu$ or a circle around a component of 
the banded link. 
Fix a $k$-admissible coloring $j$ of $\Gamma$. 
Assume that $\mu$ has an edge colored by $a$ as its core. 
See the figure below. 


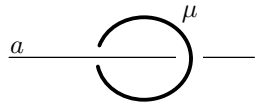
\begin{figure}[htb]
\centering
\unitlength 0.1in
\begin{picture}( 12.8300,  5.7800)(  7.7000, -9.9700)
\put(16.7400,-5.8900){\makebox(0,0)[lb]{$\mu$}}%
%
\special{pn 8}%
\special{pa 770 776}%
\special{pa 2054 776}%
\special{fp}%
%
\special{pn 8}%
\special{sh 0}%
\special{pa 1646 718}%
\special{pa 1784 718}%
\special{pa 1784 846}%
\special{pa 1646 846}%
\special{pa 1646 718}%
\special{ip}%
%
\special{pn 20}%
\special{ar 1476 782 246 216  3.4626201 6.2831853}%
\special{ar 1476 782 246 216  0.0000000 2.9457653}%
\put(7.7600,-7.5200){\makebox(0,0)[lb]{$a$}}%
\end{picture}%
\caption{Meridian $\mu$}
\end{figure}

\noindent
Here the thick line represents 
the cycle colored by $k$, 
and we use such a notation 
throughout this article. 
By the graphical computation below one has that 
$$
Z(\mu):|j\rangle\mapsto (-1)^a|j\rangle. 
$$ 
See also Remark~7.6 in \cite{BHMV}.
In fact one can compute as follows.  

\begin{center}
\unitlength 0.1in
\begin{picture}( 37.3300, 11.9600)(  5.8000,-19.1600)
%
\special{pn 8}%
\special{pa 580 958}%
\special{pa 1848 958}%
\special{fp}%
%
\special{pn 8}%
\special{sh 0}%
\special{pa 1444 900}%
\special{pa 1584 900}%
\special{pa 1584 1026}%
\special{pa 1444 1026}%
\special{pa 1444 900}%
\special{ip}%
%
\special{pn 20}%
\special{ar 1278 964 242 214  3.4648095 6.2831853}%
\special{ar 1278 964 242 214  0.0000000 2.9484388}%
\put(6.1000,-9.4000){\makebox(0,0)[lb]{$a$}}%
%
\special{pn 8}%
\special{sh 0}%
\special{pa 3908 934}%
\special{pa 4046 934}%
\special{pa 4046 1062}%
\special{pa 3908 1062}%
\special{pa 3908 934}%
\special{ip}%
%
\special{pn 20}%
\special{ar 2992 976 180 122  3.4923985 6.2831853}%
\put(26.2300,-9.3900){\makebox(0,0)[lb]{$a$}}%
\put(41.1000,-9.4000){\makebox(0,0)[lb]{$a$}}%
\put(32.0800,-9.3900){\makebox(0,0)[lb]{$k-a$}}%
%
\special{pn 20}%
\special{ar 3366 1566 416 162  5.5896333 6.2831853}%
\special{ar 3366 1566 416 162  0.0000000 3.8382352}%
\put(32.2300,-14.1900){\makebox(0,0)[lb]{$k-a$}}%
\put(27.3000,-14.1000){\makebox(0,0)[lb]{$a$}}%
\put(39.1500,-14.1300){\makebox(0,0)[lb]{$a$}}%
%
\special{pn 8}%
\special{pa 2690 1906}%
\special{pa 4138 1906}%
\special{fp}%
\put(27.3000,-18.8000){\makebox(0,0)[lb]{$a$}}%
%
\special{pn 13}%
\special{pa 2002 958}%
\special{pa 2362 958}%
\special{fp}%
\special{sh 1}%
\special{pa 2362 958}%
\special{pa 2296 938}%
\special{pa 2310 958}%
\special{pa 2296 978}%
\special{pa 2362 958}%
\special{fp}%
%
\special{pn 13}%
\special{pa 2012 1452}%
\special{pa 2372 1452}%
\special{fp}%
\special{sh 1}%
\special{pa 2372 1452}%
\special{pa 2304 1432}%
\special{pa 2318 1452}%
\special{pa 2304 1472}%
\special{pa 2372 1452}%
\special{fp}%
%
\special{pn 13}%
\special{pa 2012 1912}%
\special{pa 2372 1912}%
\special{fp}%
\special{sh 1}%
\special{pa 2372 1912}%
\special{pa 2304 1892}%
\special{pa 2318 1912}%
\special{pa 2304 1932}%
\special{pa 2372 1912}%
\special{fp}%
%
\special{pn 8}%
\special{pa 2506 984}%
\special{pa 4314 984}%
\special{fp}%
%
\special{pn 8}%
\special{sh 0}%
\special{pa 3892 940}%
\special{pa 4032 940}%
\special{pa 4032 1066}%
\special{pa 3892 1066}%
\special{pa 3892 940}%
\special{ip}%
%
\special{pn 20}%
\special{ar 3760 980 202 120  3.1415927 6.2831853}%
%
\special{pn 20}%
\special{ar 3370 980 586 216  6.2831853 6.2831853}%
\special{ar 3370 980 586 216  0.0000000 2.8286589}%
%
\special{pn 8}%
\special{pa 2680 1460}%
\special{pa 4128 1460}%
\special{fp}%
\put(20.2000,-8.9000){\makebox(0,0)[lb]{{\small fusion}}}%
\put(18.8000,-14.0000){\makebox(0,0)[lb]{{\small half twist$\times$2}}}%
\end{picture}%
\end{center} 

\noindent
The coefficient associated with these diagrams 
is 
\begin{eqnarray*}
\frac{\langle k-a \rangle}{\langle a\ k \ k-a\rangle}
\delta(k-a;k,a)^2\frac{\langle a\ k \ k-a\rangle}{\langle a \rangle}
&=&A^{-2a(k+2)}\frac{\langle k-a \rangle}{\langle a \rangle}
=(-1)^{k+a}.\\
\end{eqnarray*}

\medskip

\noindent
Here we used the equality 
\begin{equation}
\unitlength 0.1in
\begin{picture}( 41.9200,  5.5700)(  4.5000,-15.7300)
%
\special{pn 8}%
\special{pa 1454 1390}%
\special{pa 1760 1390}%
\special{fp}%
%
\special{pn 8}%
\special{pa 450 1390}%
\special{pa 758 1390}%
\special{fp}%
\put(5.2700,-13.6800){\makebox(0,0)[lb]{$a$}}%
\put(15.5700,-13.6800){\makebox(0,0)[lb]{$a$}}%
\put(9.9400,-11.8600){\makebox(0,0)[lb]{$k-a$}}%
%
\special{pn 8}%
\special{ar 1106 1394 344 180  3.1415927 6.2831853}%
%
\special{pn 20}%
\special{ar 1106 1394 344 180  6.2831853 6.2831853}%
\special{ar 1106 1394 344 180  0.0000000 3.1415927}%
\put(18.1000,-15.6000){\makebox(0,0)[lb]{$=\displaystyle{\frac{\langle a\ k \ k-a\rangle}{\langle a \rangle}}$}}%
%
\special{pn 8}%
\special{pa 2750 1380}%
\special{pa 3262 1380}%
\special{fp}%
\put(29.5000,-13.6000){\makebox(0,0)[lb]{$a$}}%
\put(33.9000,-15.6000){\makebox(0,0)[lb]{$=\displaystyle{\frac{(-1)^k}{\langle a\rangle}}$}}%
\put(43.3000,-13.6000){\makebox(0,0)[lb]{$a$}}%
%
\special{pn 8}%
\special{pa 4130 1380}%
\special{pa 4642 1380}%
\special{fp}%
\end{picture}%
\label{last}
\end{equation}

\vspace{0.5cm}

\noindent
{\bf Longitude.} 
Take a cycle $\lambda$ in $\Gamma_0\subset C_{\Gamma}$ 
and a $k$-admissible coloring $j$ of $\Gamma$. 
To compute the explicit description of 
the involution $Z(\lambda)$ on $|j\rangle$ 
it is enough to consider $\lambda${\it -external} 
(resp. {\it internal}) {\it edges}. 

\begin{Def}[$\lambda$-external edge and $\lambda$-internal edge]\label{defextedge}
For a cycle $\lambda$ in $\Gamma$, an edge $f\in \Gamma$ is said to be a 
$\lambda$-{\it external edge} if 
the cycle $\lambda$ on $\Gamma$ does not pass through $f$ 
and one of the vertex of $f$ lies on $\lambda$ and
the other is not. 
If $\lambda$ does not pass through $f$ and 
all vertices of $f$ lie on $\lambda$, then we call 
$f$ is a {\it $\lambda$-internal edge}. 
See Figure~\ref{extint}. 
For given $\lambda$ we denote the set of all 
$\lambda$-external (resp. $\lambda$-internal) edges by 
$\text{Ex}_{\lambda}$ (resp. $\text{In}_{\lambda}$). 
\end{Def}

\begin{figure}[htb]
\centering
\unitlength 0.1in
\begin{picture}( 42.9000, 11.6600)( 10.3800,-16.0200)
%
\special{pn 8}%
\special{ar 2160 1270 744 632  3.1681016 5.5323330}%
%
\special{pn 8}%
\special{pa 1348 1098}%
\special{pa 1038 1098}%
\special{fp}%
\put(10.8800,-12.1400){\makebox(0,0)[lb]{ext}}%
%
\special{pn 8}%
\special{pa 2068 924}%
\special{pa 2452 924}%
\special{fp}%
%
\special{pn 8}%
\special{pa 2068 1256}%
\special{pa 2452 1256}%
\special{fp}%
%
\special{pn 8}%
\special{ar 2658 1090 270 250  0.1354594 6.2831853}%
\special{ar 2658 1090 270 250  0.0000000 0.1314492}%
\put(18.6400,-16.0600){\makebox(0,0)[lb]{int}}%
%
\special{pn 8}%
\special{sh 0}%
\special{pa 1816 1142}%
\special{pa 1924 1142}%
\special{pa 1924 1250}%
\special{pa 1816 1250}%
\special{pa 1816 1142}%
\special{ip}%
\put(21.8400,-13.6600){\makebox(0,0)[lb]{ext}}%
\put(21.6000,-9.1000){\makebox(0,0)[lb]{ext}}%
%
\special{pn 8}%
\special{sh 0}%
\special{pa 1592 734}%
\special{pa 1728 734}%
\special{pa 1728 910}%
\special{pa 1592 910}%
\special{pa 1592 734}%
\special{ip}%
%
\special{pn 20}%
\special{ar 1740 1082 394 300  0.0079680 6.2831853}%
%
\special{pn 8}%
\special{sh 0}%
\special{pa 1808 1326}%
\special{pa 1916 1326}%
\special{pa 1916 1434}%
\special{pa 1808 1434}%
\special{pa 1808 1326}%
\special{ip}%
%
\special{pn 8}%
\special{ar 1768 1118 130 484  5.6039161 6.2831853}%
\special{ar 1768 1118 130 484  0.0000000 2.5467584}%
\put(20.4000,-6.0600){\makebox(0,0)[lb]{ext}}%
%
\special{pn 8}%
\special{ar 4560 1270 744 632  3.1681016 5.5323330}%
%
\special{pn 8}%
\special{pa 3750 1098}%
\special{pa 3440 1098}%
\special{fp}%
\put(34.8800,-12.1400){\makebox(0,0)[lb]{ext}}%
%
\special{pn 8}%
\special{pa 4470 924}%
\special{pa 4854 924}%
\special{fp}%
%
\special{pn 8}%
\special{pa 4470 1256}%
\special{pa 4854 1256}%
\special{fp}%
%
\special{pn 20}%
\special{ar 5060 1090 270 250  0.1354594 6.2831853}%
\special{ar 5060 1090 270 250  0.0000000 0.1314492}%
\put(42.6500,-16.0600){\makebox(0,0)[lb]{int}}%
%
\special{pn 8}%
\special{sh 0}%
\special{pa 4218 1142}%
\special{pa 4324 1142}%
\special{pa 4324 1250}%
\special{pa 4218 1250}%
\special{pa 4218 1142}%
\special{ip}%
\put(45.8500,-13.6600){\makebox(0,0)[lb]{int}}%
\put(45.6100,-9.1000){\makebox(0,0)[lb]{int}}%
%
\special{pn 8}%
\special{sh 0}%
\special{pa 3994 734}%
\special{pa 4130 734}%
\special{pa 4130 910}%
\special{pa 3994 910}%
\special{pa 3994 734}%
\special{ip}%
%
\special{pn 20}%
\special{ar 4142 1082 394 300  0.0079759 6.2831853}%
%
\special{pn 8}%
\special{sh 0}%
\special{pa 4210 1326}%
\special{pa 4316 1326}%
\special{pa 4316 1434}%
\special{pa 4210 1434}%
\special{pa 4210 1326}%
\special{ip}%
%
\special{pn 8}%
\special{ar 4170 1118 132 484  5.6031970 6.2831853}%
\special{ar 4170 1118 132 484  0.0000000 2.5442127}%
\put(44.4100,-6.0600){\makebox(0,0)[lb]{int}}%
\end{picture}%
\caption{External edges and internal edges for $\lambda$
depicted by thick lines.}
\label{extint}
\end{figure}
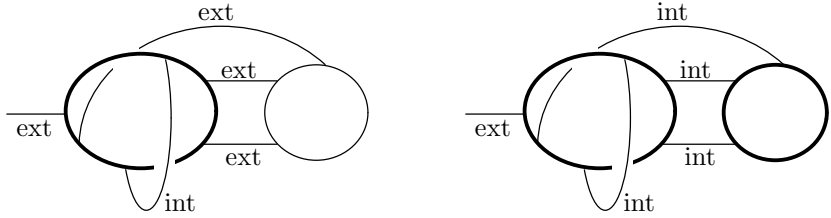

By focusing vertices on (a component of) $\lambda$ and 
cutting $\lambda$-internal edges, one has 
the colored plane diagram 
as a part of $\Gamma\sqcup\lambda$ as in Figure~\ref{D(lambda)}. 
\begin{figure}[htb]
\centering
\unitlength 0.1in
\begin{picture}( 13.4200,  7.8500)( 15.0100,-15.8500)
%
\special{pn 20}%
\special{ar 2172 1208 584 320  5.0802801 6.2831853}%
\special{ar 2172 1208 584 320  0.0000000 4.4540720}%
%
\special{pn 8}%
\special{ar 2172 1208 672 378  4.0514209 6.2831853}%
\special{ar 2172 1208 672 378  0.0000000 4.0482988}%
%
\special{pn 8}%
\special{sh 0}%
\special{pa 1900 800}%
\special{pa 2468 800}%
\special{pa 2468 1016}%
\special{pa 1900 1016}%
\special{pa 1900 800}%
\special{fp}%
\put(19.6400,-9.8400){\makebox(0,0)[lb]{$D(\lambda,j)$}}%
\end{picture}%
\caption{Diagram around $\lambda$}
\label{D(lambda)}
\end{figure}
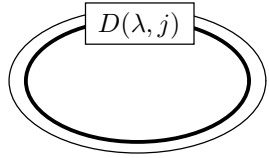
The diagram $D(\lambda,j)$ in the box 
consists of a combination of the two diagrams in Figure~\ref{IandII}. 
(We take a convention for plane diagrams 
in which we draw the cycle $\lambda$ over the graph $\Gamma$.) 
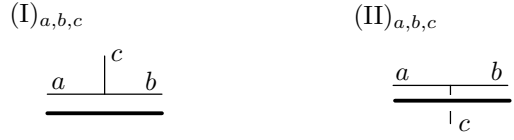
\begin{figure}[htb]
\centering
\unitlength 0.1in
\begin{picture}( 26.2000,  6.6000)( 16.4000,-15.3000)
%
\special{pn 8}%
\special{pa 3650 1330}%
\special{pa 4250 1330}%
\special{fp}%
\special{pa 3950 1330}%
\special{pa 3950 1530}%
\special{fp}%
\put(36.6000,-13.0000){\makebox(0,0)[lb]{$a$}}%
\put(41.6000,-13.0000){\makebox(0,0)[lb]{$b$}}%
\put(39.9000,-15.7000){\makebox(0,0)[lb]{$c$}}%
%
\special{pn 20}%
\special{sh 0}%
\special{pa 3920 1380}%
\special{pa 4000 1380}%
\special{pa 4000 1460}%
\special{pa 3920 1460}%
\special{pa 3920 1380}%
\special{ip}%
%
\special{pn 20}%
\special{pa 3660 1410}%
\special{pa 4260 1410}%
\special{fp}%
%
\special{pn 8}%
\special{pa 1840 1376}%
\special{pa 2440 1376}%
\special{fp}%
\special{pa 2140 1376}%
\special{pa 2140 1176}%
\special{fp}%
\put(18.6000,-13.5000){\makebox(0,0)[lb]{$a$}}%
\put(23.5000,-13.5500){\makebox(0,0)[lb]{$b$}}%
%
\special{pn 20}%
\special{pa 1840 1476}%
\special{pa 2440 1476}%
\special{fp}%
\put(21.7000,-12.0000){\makebox(0,0)[lb]{$c$}}%
\put(16.4000,-10.4000){\makebox(0,0)[lb]{(I)$_{a,b,c}$}}%
\put(34.4000,-10.5000){\makebox(0,0)[lb]{(II)$_{a,b,c}$}}%
\end{picture}%
\caption{Diagrams in $D(\lambda , j)$}
\label{IandII}
\end{figure}
Here thick lines in Fig~\ref{IandII} 
represent a part of $\lambda\subset \Gamma_0$ 
colored by $k$ and 
edges colored by $c$ correspond to $\lambda$-external/internal edges. 
We define a number $\epsilon_{\lambda}(v)$ 
for each trivalent vertex $v$ in $D(\lambda,j)$ 
as follows. 
$$
\epsilon_{\lambda}(v)=\left\{\begin{array}{ll}
0 \qquad ({\rm if} \ v \ {\rm is \ type \ (I)}_{a,b,c}) 
\\ a-b \qquad ({\rm if} \ v \ {\rm is \ type \ (II)}_{a,b,c}). \end{array}\right.
$$

To describe the involution $Z(\lambda)$ explicitly, 
let us introduce the action of the homology of the graph  
on the set of $k$-admissible colorings. 

\begin{Def}\label{Gammaaction}
Let $\Gamma$ be a unitrivalent graph. 
For a $k$-admissible coloring $j$ of $\Gamma$ 
and a cycle $\lambda$ on $\Gamma$, 
define $\lambda\cdot j$ by 
$$
\lambda\cdot j:=(\ldots, k-j_l, \ldots), 
$$
that is, change all colors on the edges on $\lambda$ 
from $j_l$ to $k-j_l$, and the others are the same colors as those in $j$.
One can check that $\lambda\cdot j$ is also a 
$k$-admissible coloring and this operation induces 
an action of $H_1(\Gamma;\Z/2)$ on the set of all 
$k$-admissible colorings of $\Gamma$. 
\end{Def}

\begin{Prop}
The involution $Z(\lambda)$ can be described as 
$$
Z(\lambda): |j\rangle \mapsto \delta_{j}(\lambda)|\lambda\cdot j\rangle. 
$$Here $\delta_j$ is defined by 
$$
\delta_j(\lambda):=(-1)^{kn_{\lambda}+\sum j_l'/2}
\prod_{f_l\subset\lambda}A^{\epsilon_{\lambda}(v)(k+2)}
\frac{[k-j(f_l)]!}{[j(f_l)]!}
\frac{\left[\frac{j(f_l)+j(f_{l+1})+j_l'}{2}+1\right]!}
{\left[\frac{k-j(f_l)+k-j(f_{l+1})+j_l'}{2}+1\right]!},
$$where $n_{\lambda}$ is the number of edges on $\lambda$, 
$v$ runs vertices on $\lambda$ and 
$j_l'$ are colorings on $\lambda$-external or internal 
edges. (See Figure~\ref{j's}.)
\end{Prop}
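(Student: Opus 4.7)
The plan is to perform the skein-theoretic computation locally around $\lambda$, using the diagram $D(\lambda,j)$ of Figure~\ref{D(lambda)}. Since $\lambda\subset\Gamma_0\subset C_\Gamma$ is colored by $k$ and lies over $\Gamma$, the cobordism element $Z(\lambda)|j\rangle$ is represented by $\Gamma\sqcup\lambda$ with $\lambda$ pushed slightly off $\Gamma$. The strategy is to absorb $\lambda$ into $\Gamma$ edge-by-edge along $\lambda$ by applying the three local identities of Theorems~1, 2, and 3, then collect all resulting scalar factors.

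First I would localize the computation at each trivalent vertex of type (I)$_{a,b,c}$ or (II)$_{a,b,c}$ in Figure~\ref{IandII}. At each vertex on $\lambda$, the edge of $\Gamma$ colored $j(f_l)$ runs parallel to $\lambda$ (colored $k$); applying the fusion relation (Theorem~1) merges these two parallel strands, and the quantum Clebsch--Gordan condition together with the observation ``$(k,a,c)$ is $k$-admissible iff $c=k-a$'' forces the new color on each $\lambda$-edge to be $k-j(f_l)$. This precisely produces the new coloring $\lambda\cdot j$. The fusion at each vertex contributes a factor $\langle k-j(f_l)\rangle/\langle j(f_l)\,k\,k-j(f_l)\rangle$, and the subsequent tetrahedral reduction (Theorem~2) at that vertex yields the ratio
\[
\frac{\left<\begin{array}{ccc} k-j(f_l) & k-j(f_{l+1}) & j_l'\\ j(f_{l+1}) & j(f_l) & k\end{array}\right>}{\langle k-j(f_l)\ k-j(f_{l+1})\ j_l'\rangle},
\]
which by the fourth preliminary identity collapses to the product of $[\,\cdot\,]!$'s appearing in the statement, along with the sign $(-1)^{(j(f_l)+j(f_{l+1})-j_l')/2}$. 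The identity $[k-\alpha]![\alpha+1]!=[k+1]!$ is what cleans up the residual factorials coming from $\langle a,b,c\rangle$.

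Next I would handle the type (II)$_{a,b,c}$ vertices separately. A vertex of type (II) has a half-twist between the two strands emerging on the $\lambda$ side; before fusing, one applies Theorem~3 to straighten it. This contributes $\delta(k-j(f_l);k,j(f_l))\delta(k-j(f_{l+1});k,j(f_{l+1}))^{-1}$ (or an analogous combination), which by the third and fourth preliminary identities equals $A^{\epsilon_\lambda(v)(k+2)}$ up to a sign $(-1)^{j(f_l)-j(f_{l+1})}$. For type (I) vertices $\epsilon_\lambda(v)=0$ and no half-twist is needed. At the boundary of $D(\lambda,j)$ (or at the meeting of a $\lambda$-external/internal edge), one closes up using the cap/cup identity \eqref{last}, which provides exactly one factor of $(-1)^k/\langle j(f_l)\rangle$ per edge $f_l\subset\lambda$; this accounts for the $(-1)^{kn_\lambda}$ in $\delta_j(\lambda)$ and for the cancellation of the $\langle k-a\rangle/\langle a\,k\,k-a\rangle$ fusion factors via the third preliminary identity.

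The remaining task is bookkeeping: collecting the tetrahedral signs $(-1)^{(j(f_l)+j(f_{l+1})-j_l')/2}$ around $\lambda$ and showing they telescope. Each internal color $j(f_l)$ of $\lambda$ appears in two adjacent tetrahedra and contributes with opposite parity, so those terms cancel mod $2$, leaving the external/internal colors $j_l'$ each appearing once with coefficient $1/2$; this yields the global sign $(-1)^{\sum j_l'/2}$. The expected main obstacle is exactly this sign-telescoping step: one must be attentive to the order in which fusion, half-twist, and tetrahedral moves are applied at type (II) vertices, and one must verify that the resulting cycle-index combination of exponents $\epsilon_\lambda(v)$ really equals the sum prescribed in the formula. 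Apart from this careful accounting, the computation is a direct local-to-global assembly of the three formulas in Section~2, and the fact that $\lambda\cdot j$ is again $k$-admissible (Definition~\ref{Gammaaction}) is what guarantees that no admissibility obstruction blocks the fusion step.
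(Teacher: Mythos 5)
Your overall strategy---absorb $\lambda$ into $\Gamma$ vertex by vertex via fusion, tetrahedron and half-twist moves, treating type (I) and type (II) vertices separately, forcing the new colors $k-j(f_l)$ by the admissibility of $(k,a,c)$ only for $c=k-a$, and then assembling the scalars---is exactly the paper's proof. However, two of your intermediate bookkeeping claims are wrong as stated, and if followed literally they would derail the computation.

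First, the half twist at a type (II) vertex is not ``between the two strands emerging on the $\lambda$ side'': it sits on the crossing external/internal edge colored $c=j_l'$, which passes under $\lambda$. The paper's factor is $\overline{\delta(k-c;k,c)}=(-1)^cA^{c(k+2)}$, together with an \emph{extra} fusion on that $c$-edge and a \emph{second} tetrahedron coefficient; only after combining all of these does the exponent collapse to $\epsilon_\lambda(v)(k+2)=(a-b)(k+2)$. Your guessed combination $\delta(k-j(f_l);k,j(f_l))\,\delta(k-j(f_{l+1});k,j(f_{l+1}))^{-1}$ does happen to equal $A^{(a-b)(k+2)}$ numerically, but it is not what the graphical moves produce, so this step is asserted rather than derived.

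Second, the sign telescoping is incorrect. Each $\lambda$-edge color $j(f_l)$ enters the tetrahedral signs of its two endpoint vertices with the \emph{same} coefficient $+\tfrac12$, not with ``opposite parity,'' so these terms contribute a residual $(-1)^{\sum_l j(f_l)}$ rather than cancelling among themselves. Relatedly, each $\lambda$-edge receives the fusion factor $\langle k-a\rangle/\langle a\,k\,k-a\rangle=\langle a\rangle$ \emph{twice}, once from each endpoint vertex, so closing the loop with $(-1)^k/\langle a\rangle$ does not cancel it cleanly but leaves $(-1)^k\langle a\rangle=(-1)^{k+a}[a+1]$ per edge. That residual is essential: the $[a+1]$ is what converts $[k-a]!^2/[k+1]!$ into $[k-a]!/[a]!$ (via $[k-a]!\,[a+1]!=[k+1]!$), and the $(-1)^{k+a}$ is what produces $(-1)^{kn_\lambda}$ and cancels the uncancelled tetrahedral sign, leaving exactly $(-1)^{kn_\lambda+\sum j_l'/2}$. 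Your accounting omits both the doubling of the fusion factor and this residual, and the two omissions only accidentally compensate; the step you flag as ``the expected main obstacle'' is indeed where your sketch breaks and needs to be redone as in the paper.
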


\begin{figure}[htb]
\centering
\unitlength 0.1in
\begin{picture}( 14.1800, 11.6600)( 20.4000,-19.9500)
%
\special{pn 13}%
\special{ar 2570 1608 530 388  4.1083266 6.2831853}%
\special{ar 2570 1608 530 388  0.0000000 0.5811727}%
%
\special{pn 8}%
\special{ar 2570 1608 530 388  0.5886948 0.6148956}%
\special{ar 2570 1608 530 388  0.6934983 0.7196991}%
\special{ar 2570 1608 530 388  0.7983018 0.8245026}%
\special{ar 2570 1608 530 388  0.9031052 0.9293061}%
\special{ar 2570 1608 530 388  1.0079087 1.0341096}%
\special{ar 2570 1608 530 388  1.1127122 1.1389131}%
\special{ar 2570 1608 530 388  1.2175157 1.2437166}%
\special{ar 2570 1608 530 388  1.3223192 1.3485201}%
\special{ar 2570 1608 530 388  1.4271227 1.4533236}%
\special{ar 2570 1608 530 388  1.5319262 1.5581271}%
\special{ar 2570 1608 530 388  1.6367297 1.6629306}%
\special{ar 2570 1608 530 388  1.7415332 1.7677341}%
\special{ar 2570 1608 530 388  1.8463367 1.8725376}%
\special{ar 2570 1608 530 388  1.9511402 1.9773411}%
\special{ar 2570 1608 530 388  2.0559437 2.0821445}%
\special{ar 2570 1608 530 388  2.1607472 2.1869480}%
\special{ar 2570 1608 530 388  2.2655507 2.2917515}%
\special{ar 2570 1608 530 388  2.3703542 2.3965550}%
\special{ar 2570 1608 530 388  2.4751576 2.5013585}%
\special{ar 2570 1608 530 388  2.5799611 2.6061620}%
\special{ar 2570 1608 530 388  2.6847646 2.7109655}%
\special{ar 2570 1608 530 388  2.7895681 2.8157690}%
\special{ar 2570 1608 530 388  2.8943716 2.9205725}%
\special{ar 2570 1608 530 388  2.9991751 3.0253760}%
\special{ar 2570 1608 530 388  3.1039786 3.1301795}%
\special{ar 2570 1608 530 388  3.2087821 3.2349830}%
\special{ar 2570 1608 530 388  3.3135856 3.3397865}%
\special{ar 2570 1608 530 388  3.4183891 3.4445900}%
\special{ar 2570 1608 530 388  3.5231926 3.5493935}%
\special{ar 2570 1608 530 388  3.6279961 3.6541969}%
\special{ar 2570 1608 530 388  3.7327996 3.7590004}%
\special{ar 2570 1608 530 388  3.8376031 3.8638039}%
\special{ar 2570 1608 530 388  3.9424066 3.9686074}%
\special{ar 2570 1608 530 388  4.0472100 4.0734109}%
\special{ar 2570 1608 530 388  4.1520135 4.1782144}%
\special{ar 2570 1608 530 388  4.2568170 4.2830179}%
\special{ar 2570 1608 530 388  4.3616205 4.3878214}%
\special{ar 2570 1608 530 388  4.4664240 4.4926249}%
\special{ar 2570 1608 530 388  4.5712275 4.5974284}%
\special{ar 2570 1608 530 388  4.6760310 4.7022319}%
\special{ar 2570 1608 530 388  4.7808345 4.8070354}%
\special{ar 2570 1608 530 388  4.8856380 4.9118389}%
\special{ar 2570 1608 530 388  4.9904415 5.0166424}%
\special{ar 2570 1608 530 388  5.0952450 5.1214459}%
\special{ar 2570 1608 530 388  5.2000485 5.2262493}%
\special{ar 2570 1608 530 388  5.3048520 5.3310528}%
\special{ar 2570 1608 530 388  5.4096555 5.4358563}%
\special{ar 2570 1608 530 388  5.5144590 5.5406598}%
\special{ar 2570 1608 530 388  5.6192625 5.6454633}%
\special{ar 2570 1608 530 388  5.7240659 5.7502668}%
\special{ar 2570 1608 530 388  5.8288694 5.8550703}%
\special{ar 2570 1608 530 388  5.9336729 5.9598738}%
\special{ar 2570 1608 530 388  6.0384764 6.0646773}%
\special{ar 2570 1608 530 388  6.1432799 6.1694808}%
\special{ar 2570 1608 530 388  6.2480834 6.2742843}%
\special{ar 2570 1608 530 388  6.3528869 6.3790878}%
\special{ar 2570 1608 530 388  6.4576904 6.4838913}%
\special{ar 2570 1608 530 388  6.5624939 6.5886948}%
\special{ar 2570 1608 530 388  6.6672974 6.6934983}%
\special{ar 2570 1608 530 388  6.7721009 6.7983018}%
%
\special{pn 8}%
\special{pa 3090 1658}%
\special{pa 3458 1658}%
\special{fp}%
\put(23.7300,-9.9900){\makebox(0,0)[lb]{$j_1'$}}%
\put(28.3800,-10.4800){\makebox(0,0)[lb]{$j_2'$}}%
\put(33.2400,-18.0300){\makebox(0,0)[lb]{$j_n'$}}%
%
\special{pn 8}%
\special{pa 2426 1232}%
\special{pa 2298 914}%
\special{fp}%
%
\special{pn 8}%
\special{pa 2732 1246}%
\special{pa 2810 934}%
\special{fp}%
\put(22.3300,-14.3800){\makebox(0,0)[lb]{$j(f_1)$}}%
\put(28.4700,-12.7000){\makebox(0,0)[lb]{$j(f_3)$}}%
\put(24.7000,-12.0100){\makebox(0,0)[lb]{$j(f_2)$}}%
\put(27.6100,-18.0300){\makebox(0,0)[lb]{$j(f_1)$}}%
\put(31.3100,-16.0100){\makebox(0,0)[lb]{$j(f_n)$}}%
%
\special{pn 8}%
\special{pa 3078 1294}%
\special{pa 3232 1448}%
\special{dt 0.045}%
\end{picture}%
\caption{Colorings of
$\lambda$-external/internal edges}
\label{j's}
\end{figure}
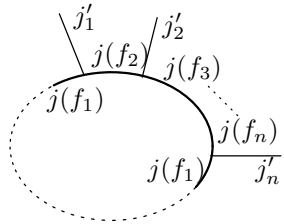

\begin{proof}
Using formulas explained in the previous section
we carry out the 
graphical computations for the diagrams 
(I)$_{a,b,c}$ and (II)$_{a,b,c}$.

\begin{center}
\unitlength 0.1in
\begin{picture}( 36.7400, 13.2200)(  1.5000,-16.1200)
%
\special{pn 8}%
\special{pa 800 758}%
\special{pa 1520 758}%
\special{fp}%
%
\special{pn 8}%
\special{pa 1160 758}%
\special{pa 1160 518}%
\special{fp}%
%
\special{pn 20}%
\special{pa 800 878}%
\special{pa 1520 878}%
\special{fp}%
\put(8.0000,-6.3700){\makebox(0,0)[lb]{$a$}}%
\put(15.2000,-6.3700){\makebox(0,0)[lb]{$b$}}%
\put(11.6000,-4.6900){\makebox(0,0)[lb]{$c$}}%
%
\special{pn 8}%
\special{pa 3092 752}%
\special{pa 3092 392}%
\special{fp}%
%
\special{pn 20}%
\special{pa 2372 992}%
\special{pa 2612 752}%
\special{fp}%
%
\special{pn 20}%
\special{pa 3824 992}%
\special{pa 3584 752}%
\special{fp}%
%
\special{pn 8}%
\special{pa 2372 752}%
\special{pa 3812 752}%
\special{fp}%
%
\special{pn 20}%
\special{ar 3092 758 186 108  6.2831853 6.2831853}%
\special{ar 3092 758 186 108  0.0000000 3.1415927}%
\put(23.9000,-6.8500){\makebox(0,0)[lb]{$a$}}%
\put(37.2200,-6.9100){\makebox(0,0)[lb]{$b$}}%
\put(31.2000,-4.6000){\makebox(0,0)[lb]{$c$}}%
\put(29.0600,-6.8500){\makebox(0,0)[lb]{$a$}}%
\put(31.5200,-6.9100){\makebox(0,0)[lb]{$b$}}%
\put(26.0000,-9.4000){\makebox(0,0)[lb]{$k-a$}}%
\put(32.9600,-9.5200){\makebox(0,0)[lb]{$k-b$}}%
%
\special{pn 8}%
\special{pa 3092 1372}%
\special{pa 3090 1120}%
\special{fp}%
%
\special{pn 20}%
\special{pa 2372 1612}%
\special{pa 2612 1372}%
\special{fp}%
%
\special{pn 20}%
\special{pa 3824 1612}%
\special{pa 3584 1372}%
\special{fp}%
%
\special{pn 8}%
\special{pa 2372 1372}%
\special{pa 3812 1372}%
\special{fp}%
\put(23.9000,-13.0600){\makebox(0,0)[lb]{$a$}}%
\put(37.2200,-13.1200){\makebox(0,0)[lb]{$b$}}%
\put(31.3000,-11.8000){\makebox(0,0)[lb]{$c$}}%
\put(26.9000,-14.8000){\makebox(0,0)[lb]{$k-a$}}%
\put(32.1200,-14.8000){\makebox(0,0)[lb]{$k-b$}}%
%
\special{pn 13}%
\special{pa 1760 758}%
\special{pa 2120 758}%
\special{fp}%
\special{sh 1}%
\special{pa 2120 758}%
\special{pa 2054 738}%
\special{pa 2068 758}%
\special{pa 2054 778}%
\special{pa 2120 758}%
\special{fp}%
%
\special{pn 13}%
\special{pa 1760 1384}%
\special{pa 2120 1384}%
\special{fp}%
\special{sh 1}%
\special{pa 2120 1384}%
\special{pa 2054 1364}%
\special{pa 2068 1384}%
\special{pa 2054 1404}%
\special{pa 2120 1384}%
\special{fp}%
\put(1.5000,-7.6000){\makebox(0,0)[lb]{(I)$_{a,b,c} \ ;$ }}%
\put(16.9000,-7.1000){\makebox(0,0)[lb]{{\small fusion$\times$2}}}%
\put(16.2000,-13.3000){\makebox(0,0)[lb]{{\small tetrahedron} }}%
\end{picture}%
\end{center}

The coefficient associated with these diagrams is 
\begin{eqnarray*} 
{\rm (I)}_{a,b,c}&;& 
\frac{\langle k-a \rangle}{\langle a\ k \ k-a\rangle}
\frac{\langle k-b \rangle}{\langle b\ k \ k-b\rangle}
\frac{\left<\begin{array}{ccc}
k-a & k-b &c\\ 
b & a & k
\end{array}\right>}{\langle k-a \ k-b \ c\rangle}\\ 
&=&\langle k-a\rangle\langle k-b\rangle
\frac{\left<\begin{array}{ccc}
k-a & k-b &c\\ 
b & a & k
\end{array}\right>}{\langle k-a \ k-b \ c\rangle}\\
&=&\langle k-a\rangle\langle k-b\rangle
(-1)^{\frac{a+b-c}{2}}
\frac{[k-a]![k-b]!\left[\frac{a+b+c}{2}+1\right]!}
{[k+1]!\left[k+1-\frac{a+b-c}{2}\right]!
}\\
\end{eqnarray*}

\begin{center}
\unitlength 0.1in
\begin{picture}( 48.0000, 21.1000)(  0.0000,-26.2000)
\put(0.0000,-7.2000){\makebox(0,0)[lb]{(II)$_{a,b,c} \ ;$}}%
%
\special{pn 13}%
\special{pa 1800 720}%
\special{pa 2200 720}%
\special{fp}%
\special{sh 1}%
\special{pa 2200 720}%
\special{pa 2134 700}%
\special{pa 2148 720}%
\special{pa 2134 740}%
\special{pa 2200 720}%
\special{fp}%
\put(24.6000,-7.7000){\makebox(0,0)[lb]{$a$}}%
\put(45.5000,-7.8000){\makebox(0,0)[lb]{$b$}}%
%
\special{pn 8}%
\special{pa 2380 800}%
\special{pa 4800 800}%
\special{fp}%
%
\special{pn 8}%
\special{pa 3528 800}%
\special{pa 3530 1310}%
\special{fp}%
%
\special{pn 20}%
\special{pa 2360 1010}%
\special{pa 2750 800}%
\special{fp}%
%
\special{pn 20}%
\special{pa 4770 1010}%
\special{pa 4380 800}%
\special{fp}%
%
\special{pn 20}%
\special{sh 0}%
\special{pa 3470 1130}%
\special{pa 3596 1130}%
\special{pa 3596 1004}%
\special{pa 3470 1004}%
\special{pa 3470 1130}%
\special{ip}%
%
\special{pn 20}%
\special{ar 3530 810 358 284  6.2831853 6.2831853}%
\special{ar 3530 810 358 284  0.0000000 3.1415927}%
\put(36.1000,-13.7000){\makebox(0,0)[lb]{$c$}}%
\put(28.0000,-9.5000){\makebox(0,0)[lb]{$k-a$}}%
\put(39.7000,-9.5000){\makebox(0,0)[lb]{$k-b$}}%
\put(32.9000,-7.7000){\makebox(0,0)[lb]{$a$}}%
\put(37.0000,-7.8000){\makebox(0,0)[lb]{$b$}}%
%
\special{pn 13}%
\special{pa 1800 1920}%
\special{pa 2200 1920}%
\special{fp}%
\special{sh 1}%
\special{pa 2200 1920}%
\special{pa 2134 1900}%
\special{pa 2148 1920}%
\special{pa 2134 1940}%
\special{pa 2200 1920}%
\special{fp}%
\put(24.2000,-15.9000){\makebox(0,0)[lb]{$a$}}%
\put(45.4000,-16.0000){\makebox(0,0)[lb]{$b$}}%
%
\special{pn 8}%
\special{pa 2380 1620}%
\special{pa 4800 1620}%
\special{fp}%
%
\special{pn 20}%
\special{pa 2360 1830}%
\special{pa 2750 1620}%
\special{fp}%
%
\special{pn 20}%
\special{pa 4770 1830}%
\special{pa 4380 1620}%
\special{fp}%
\put(36.1000,-26.0000){\makebox(0,0)[lb]{$c$}}%
\put(28.3000,-17.6000){\makebox(0,0)[lb]{$k-a$}}%
\put(39.7000,-17.6000){\makebox(0,0)[lb]{$k-b$}}%
\put(33.2000,-15.9000){\makebox(0,0)[lb]{$a$}}%
%
\special{pn 20}%
\special{pa 3210 1620}%
\special{pa 3520 1840}%
\special{fp}%
%
\special{pn 8}%
\special{pa 3520 1620}%
\special{pa 3520 2620}%
\special{fp}%
%
\special{pn 20}%
\special{pa 3670 2300}%
\special{pa 3950 1620}%
\special{fp}%
\put(31.8000,-20.0000){\makebox(0,0)[lb]{$k-c$}}%
\put(35.4000,-17.6000){\makebox(0,0)[lb]{$c$}}%
\put(36.7000,-15.8000){\makebox(0,0)[lb]{$b$}}%
\put(17.2000,-6.9000){\makebox(0,0)[lb]{{\small fusion$\times$2}}}%
\put(17.9000,-18.8000){\makebox(0,0)[lb]{{\small fusion}}}%
%
\special{pn 8}%
\special{pa 820 710}%
\special{pa 1420 710}%
\special{fp}%
\special{pa 1120 710}%
\special{pa 1120 910}%
\special{fp}%
\put(8.3000,-6.8000){\makebox(0,0)[lb]{$a$}}%
\put(13.3000,-6.8000){\makebox(0,0)[lb]{$b$}}%
\put(11.6000,-9.5000){\makebox(0,0)[lb]{$c$}}%
%
\special{pn 20}%
\special{sh 0}%
\special{pa 1090 760}%
\special{pa 1170 760}%
\special{pa 1170 840}%
\special{pa 1090 840}%
\special{pa 1090 760}%
\special{ip}%
%
\special{pn 20}%
\special{pa 830 790}%
\special{pa 1430 790}%
\special{fp}%
%
\special{pn 20}%
\special{sh 0}%
\special{pa 3470 2500}%
\special{pa 3596 2500}%
\special{pa 3596 2374}%
\special{pa 3470 2374}%
\special{pa 3470 2500}%
\special{ip}%
%
\special{pn 20}%
\special{ar 3520 2230 160 190  0.3282986 4.6583875}%
\put(36.7000,-15.8000){\makebox(0,0)[lb]{$b$}}%
\end{picture}%
\end{center}

\begin{center}
\unitlength 0.1in
\begin{picture}( 43.8000, 22.5000)( 16.2000,-25.6000)
%
\special{pn 13}%
\special{pa 3000 1620}%
\special{pa 3400 1620}%
\special{fp}%
\special{sh 1}%
\special{pa 3400 1620}%
\special{pa 3334 1600}%
\special{pa 3348 1620}%
\special{pa 3334 1640}%
\special{pa 3400 1620}%
\special{fp}%
\put(36.1000,-14.8000){\makebox(0,0)[lb]{$a$}}%
\put(57.8000,-15.0000){\makebox(0,0)[lb]{$b$}}%
%
\special{pn 8}%
\special{pa 3580 1520}%
\special{pa 6000 1520}%
\special{fp}%
%
\special{pn 20}%
\special{pa 3560 1730}%
\special{pa 3950 1520}%
\special{fp}%
%
\special{pn 20}%
\special{pa 5970 1730}%
\special{pa 5580 1520}%
\special{fp}%
\put(47.2000,-20.4000){\makebox(0,0)[lb]{$c$}}%
\put(41.6000,-14.8000){\makebox(0,0)[lb]{$k-a$}}%
\put(52.5000,-16.7000){\makebox(0,0)[lb]{$k-b$}}%
\put(43.5000,-17.3000){\makebox(0,0)[lb]{$k-c$}}%
\put(48.2000,-14.9000){\makebox(0,0)[lb]{$b$}}%
%
\special{pn 8}%
\special{pa 4700 1520}%
\special{pa 4700 2120}%
\special{fp}%
%
\special{pn 20}%
\special{pa 4700 1850}%
\special{pa 5200 1520}%
\special{fp}%
%
\special{pn 13}%
\special{pa 3010 2400}%
\special{pa 3410 2400}%
\special{fp}%
\special{sh 1}%
\special{pa 3410 2400}%
\special{pa 3344 2380}%
\special{pa 3358 2400}%
\special{pa 3344 2420}%
\special{pa 3410 2400}%
\special{fp}%
\put(36.3000,-23.1000){\makebox(0,0)[lb]{$a$}}%
\put(57.7000,-23.1000){\makebox(0,0)[lb]{$b$}}%
%
\special{pn 8}%
\special{pa 3580 2320}%
\special{pa 6000 2320}%
\special{fp}%
%
\special{pn 20}%
\special{pa 3640 2530}%
\special{pa 4030 2320}%
\special{fp}%
%
\special{pn 20}%
\special{pa 5970 2530}%
\special{pa 5580 2320}%
\special{fp}%
\put(47.4000,-25.5000){\makebox(0,0)[lb]{$c$}}%
\put(42.0000,-23.1000){\makebox(0,0)[lb]{$k-a$}}%
\put(50.6000,-23.2000){\makebox(0,0)[lb]{$k-b$}}%
%
\special{pn 8}%
\special{pa 4700 2320}%
\special{pa 4700 2560}%
\special{fp}%
\put(28.7000,-15.7000){\makebox(0,0)[lb]{{\small tetrahedron}}}%
\put(28.9000,-23.4000){\makebox(0,0)[lb]{{\small tetrahedron}}}%
%
\special{pn 8}%
\special{pa 1620 710}%
\special{pa 2220 710}%
\special{ip}%
\special{pa 1920 710}%
\special{pa 1920 910}%
\special{ip}%
%
\special{pn 8}%
\special{pa 1630 790}%
\special{pa 2230 790}%
\special{ip}%
%
\special{pn 13}%
\special{pa 3020 640}%
\special{pa 3420 640}%
\special{fp}%
\special{sh 1}%
\special{pa 3420 640}%
\special{pa 3354 620}%
\special{pa 3368 640}%
\special{pa 3354 660}%
\special{pa 3420 640}%
\special{fp}%
\put(36.1000,-4.9000){\makebox(0,0)[lb]{$a$}}%
\put(57.3000,-5.0000){\makebox(0,0)[lb]{$b$}}%
%
\special{pn 8}%
\special{pa 3580 510}%
\special{pa 6000 510}%
\special{fp}%
%
\special{pn 20}%
\special{pa 3560 720}%
\special{pa 3950 510}%
\special{fp}%
%
\special{pn 20}%
\special{pa 5970 720}%
\special{pa 5580 510}%
\special{fp}%
\put(47.5000,-12.8000){\makebox(0,0)[lb]{$c$}}%
\put(40.0000,-6.7000){\makebox(0,0)[lb]{$k-a$}}%
\put(52.6000,-6.5000){\makebox(0,0)[lb]{$k-b$}}%
\put(45.7000,-4.8000){\makebox(0,0)[lb]{$a$}}%
%
\special{pn 20}%
\special{pa 4410 510}%
\special{pa 4720 730}%
\special{fp}%
%
\special{pn 20}%
\special{pa 4730 1050}%
\special{pa 5230 510}%
\special{fp}%
\put(43.7000,-9.0000){\makebox(0,0)[lb]{$k-c$}}%
\put(47.7000,-6.6000){\makebox(0,0)[lb]{$c$}}%
\put(49.6000,-4.9000){\makebox(0,0)[lb]{$b$}}%
\put(29.4000,-5.8000){\makebox(0,0)[lb]{{\small half twist}}}%
%
\special{pn 8}%
\special{pa 4730 1110}%
\special{pa 4730 510}%
\special{fp}%
\special{pa 4730 510}%
\special{pa 4730 1310}%
\special{fp}%
\end{picture}%
\end{center}

\vspace{1cm}

The coefficient associated with these diagrams is 
\begin{eqnarray*}
{\rm (II)}_{a,b,c}&;& 
\frac{\langle k-a \rangle}{\langle a\ k \ k-a\rangle}
\cdot\frac{\langle k-b \rangle}{\langle b\ k \ k-b\rangle}
\cdot\frac{\langle k-c \rangle}{\langle c\ k \ k-c\rangle}
\cdot\overline{\delta(k-c;k,c)}\\ 
&& \hspace{5cm}
\cdot\frac{\left<\begin{array}{ccc}
k-a & b &k-c\\ 
c & k & a
\end{array}\right>}{\langle k-a \ b \ k-c\rangle}
\cdot\frac{\left<\begin{array}{ccc}
k-a & k-b &c\\ 
k & k-c & b
\end{array}\right>}{\langle k-a \ k-b \ c\rangle}\\
&=&(-1)^{c}A^{c(k+2)}<k-a><k-b><k-c>\\
&&\hspace{1.5cm}\cdot(-1)^{\frac{a-b+c}{2}}
\frac{[k-a]![k-c]!\left[\frac{a+b+c}{2}+1\right]!}
{[k+1]!\left[k+1-\frac{a-b+c}{2}\right]!}
\cdot (-1)^{\frac{a+b-c}{2}}
\frac{[k-b]![c]!\left[k+1-\frac{a-b+c}{2}\right]!}
{[k+1]!\left[k+1-\frac{a+b-c}{2}\right]!
}\\
&=&A^{(2c+c+a-b+c)(k+2)}
\langle k-a\rangle\langle k-b\rangle
(-1)^{\frac{a+b-c}{2}}
\frac{[k-a]![k-b]!\left[\frac{a+b+c}{2}+1\right]!}
{[k+1]!\left[k+1-\frac{a+b-c}{2}\right]!
}
\\
&=&A^{(a-b)(k+2)}\times {\rm coeff. \ of \ (I)}_{a,b,c}. 
\end{eqnarray*}
Here we note that $A^{2(k+2)}=-1$. 
By using these computations 
together with the equality (\ref{last}) and 
$$
[k-a]!^2=[k-a]!\frac{[k+1]!}{[a+1]!}, 
$$ 
we have the explicit description of $\delta_j(\lambda)$ as in the proposition. 
\end{proof}

\begin{Rem}
The computation given here 
was carried out by Masbaum for a long time ago. 
Moreover he checked that 
by rescaling the basis 
the longitudes act by matrices 
whose entries are $\pm 1$, $\pm\sqrt{-1}$ or zero. Namely, put 
$$
g(j):=\prod_{f:{\rm edges \ of} \ \Gamma}\frac{1}{[j(f)]!}
\prod_{v : {\rm verteces \ of} \ \Gamma}[j(v)+1]! 
$$for a $k$-admissible coloring $j$, where $j(v)$ is defined by 
$$
j(v):=\frac{j(f_1)+j(f_2)+j(f_3)}{2}
$$for a vertex $v$ with edges $f_1$, $f_2$ and $f_3$. 
Then one has 
$$
Z(\lambda): g(j)^{-1}|j\rangle\mapsto 
(-1)^{kn_{\lambda}+\sum j_l'/2}
\prod_{f_l\subset\lambda}A^{\epsilon_{\lambda}(v)(k+2)}
g(\lambda\cdot j)^{-1}|\lambda\cdot j\rangle. 
$$
\end{Rem}

\subsection{External edge condition}

From the description of $Z(\lambda)$ 
obtained in the previous section, 
one can see that the trace of $Z(\lambda)$ is equal to the 
sum of coefficients $\delta_j(\lambda)$ for all $k$-admissible colorings 
$j$ with $\lambda\cdot j=j$. 
For such $\lambda$ and $j$, the diagram $D(\lambda,j)$ consists of 
diagrams (I)$_{k/2,k/2,c}$ and 
(II)$_{k/2,k/2,c}$. 
From now on we assume that $k$ is an even number 
because $k/2$ should be a natural number
\footnote{This fact implies that if $k$ is odd then 
the trace of $Z(\lambda)$ is equal to $0$.}. 
We also note that if $k$ is even and a triple $(k/2,k/2,c)$ is 
admissible then $c$ should be an even number.  
Then one can check the following proposition 
from the description of $\delta_j(\lambda)$. 

\begin{Prop}\label{skeinextedge}
The family of maps $\delta=(\delta_j)$ 
appeared in the representation matrices of $Z(\lambda)$ 
satisfies the {\it external edge condition} defined below. 
\end{Prop}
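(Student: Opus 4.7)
The plan is to read the conclusion directly off the explicit formula for $\delta_j(\lambda)$ derived in the preceding proposition, with essentially no new computation. That formula is already written as a product
$$
\delta_j(\lambda)=(-1)^{kn_\lambda+\sum j_l'/2}\prod_{f_l\subset\lambda}A^{\epsilon_\lambda(v)(k+2)}\,\frac{[k-j(f_l)]!}{[j(f_l)]!}\,\frac{\left[\tfrac{j(f_l)+j(f_{l+1})+j_l'}{2}+1\right]!}{\left[\tfrac{k-j(f_l)+k-j(f_{l+1})+j_l'}{2}+1\right]!},
$$
indexed by vertices on $\lambda$. Crucially, each factor depends only on the two colors of the cycle-edges $f_l$, $f_{l+1}$ meeting at the vertex together with the single color $j_l'$ of the incident $\lambda$-external or $\lambda$-internal edge. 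This local product shape is the algebraic content of the external edge condition (Definition~\ref{defextedgecond}), so the proof will amount to matching each piece of the formula with the corresponding piece of that definition.

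First I would identify the local contribution at each vertex $v$ of $\lambda$, namely the product of the $A^{\epsilon_\lambda(v)(k+2)}$-twist, the factorial quotient $[k-j(f_l)]!/[j(f_l)]!$, and the quotient of $j_l'$-dependent factorials, interpreting it as a function of the admissible coloring restricted to the star of $v$. Next I would redistribute the global sign $(-1)^{kn_\lambda+\sum j_l'/2}$ as a product of vertex-local or edge-local signs, using that $n_\lambda$ equals the number of vertices on $\lambda$ and that $\sum j_l'/2$ splits as a sum over $\mathrm{Ex}_\lambda\cup\mathrm{In}_\lambda$. Finally I would verify that the data so attached to each external (respectively internal) edge are exactly those required by Definition~\ref{defextedgecond}, i.e., that the dependence of $\delta_j(\lambda)$ on $j$ factors through colors on edges in $\mathrm{Ex}_\lambda\cup\mathrm{In}_\lambda$ and on edges in $\lambda$, compatibly with the $H_1(\Gamma;\Z/2)$-action on admissible colorings introduced in Definition~\ref{Gammaaction}.

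The main obstacle is bookkeeping rather than mathematical depth: the same $\lambda$-edge is $f_{l+1}$ of one vertex and $f_l$ of the next, so the naive vertex-by-vertex factorization double-counts the $[k-j(f_l)]!/[j(f_l)]!$ pieces. I would resolve this by assigning each such ratio to the edge $f_l$ itself rather than to one of its endpoints, thereby obtaining a clean splitting
$$
\delta_j(\lambda)=\text{(global sign)}\cdot\prod_{f_l\subset\lambda}(\text{edge factor})\cdot\prod_{v\in\lambda}(\text{vertex factor}),
$$
where the vertex factor depends only on the incident external/internal edge color. Once this redistribution is made, the verification that the edge and vertex factors obey the defining identities of the external edge condition is immediate from the formula.
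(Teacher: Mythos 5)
Your proposal misreads what the external edge condition asserts. Definition~\ref{defextedgecond} is not a locality or factorization property of $\delta_j(\lambda)$; it is the specific numerical identity
$$
\delta_j(\lambda)=(-1)^{\sum_{f_l\in \text{Ex}_{\lambda}}j_l/2}
\qquad\text{whenever }\ \lambda\cdot j=j,
$$
i.e.\ a statement about the \emph{value} of the coefficient on the fixed-point set of the $\lambda$-action on colorings. Your plan of ``matching each piece of the formula with the corresponding piece of the definition'' and redistributing factors between edges and vertices would establish only that $\delta_j(\lambda)$ is a product of local contributions, which is already manifest from the formula and is not what is being claimed. Nowhere do you invoke the hypothesis $\lambda\cdot j=j$, which is where all the content lies.

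The actual verification goes as follows. If $\lambda\cdot j=j$ then $j(f_l)=k-j(f_l)$, so $j(f_l)=k/2$ for every edge on $\lambda$ (forcing $k$ to be even; for odd $k$ the condition is vacuous). Substituting $a=b=k/2$ into the formula: the half-twist exponents $\epsilon_{\lambda}(v)=a-b$ vanish, so every factor $A^{\epsilon_{\lambda}(v)(k+2)}$ is $1$; each ratio $[k-j(f_l)]!/[j(f_l)]!$ equals $1$; each ratio of the $j_l'$-dependent factorials equals $1$ because $k-k/2=k/2$; and $(-1)^{kn_{\lambda}}=1$. One is left with $(-1)^{\sum j_l'/2}$, the sum running over the $\lambda$-external and $\lambda$-internal edges counted once for each of their vertices lying on $\lambda$. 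A $\lambda$-internal edge meets $\lambda$ at both of its endpoints, so its color enters the exponent twice, contributing $(-1)^{j_l'}$; since a triple $(k/2,k/2,c)$ is $k$-admissible only for even $c$, this contribution is $1$. What survives is exactly $(-1)^{\sum_{f_l\in\text{Ex}_{\lambda}}j_l/2}$, as required. None of these steps --- the substitution $j(f_l)=k/2$, the vanishing of $\epsilon_{\lambda}(v)$, or the parity argument disposing of internal edges --- appears in your proposal, so it does not prove the proposition.
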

\noindent

\begin{Def}[External edge condition]\label{defextedgecond}
Let $\alpha=(\alpha_j)$ be a family of maps 
parameterized by the set of $k$-admissible colorings of $\Gamma$ 
compatible with the given boundary coloring, 
where each map $\alpha_j$ is a map from 
$H_1(\Gamma;\Z/2)$ to the coefficient ring $R_k$ of $V_{2k+4}$. 
We say that $\alpha$ satisfies the {\it external edge condition}
if the following condition is satisfied; 
$$
\alpha_j(\lambda)
=(-1)^{\sum_{f_l\in \text{Ex}_{\lambda}}j_l/2} \qquad {\rm if} \ \lambda\cdot j=j.
$$
\end{Def}

We only note that a $\lambda$-internal edge 
can be thought as a $\lambda$-external edge 
with multiplicity $2$, and the coloring on such an edge 
contributes as $(-1)^{\rm even \ number}=1$. 

\section{Four lattices}
In this section we realize the first homology group 
of the surface in terms of a ribbon graph. 
Let $\Gamma$ be a unitrivalent ribbon graph with 
$3g-3+2n$ edges $E(\Gamma)=\{f_l\}$, $2g-2+n$ trivalent vertices 
and $n$ univalent vertices. 
We decompose the set of edges $E(\Gamma)$ into $E^u(\Gamma)\sqcup E^t(\Gamma)$, 
where $E^u(\Gamma)$ consists of edges with a 
univalent vertex and $E^t(\Gamma)$ consists of edges 
without a univalent vertex. 
Fix $3g-3+2n$ letters $\check{E}(\Gamma)=\{e_l\}$. 
These $e_l$ correspond to simple closed curves 
in the pants decomposition of $C_{\Gamma}^{\circ}$, 
the compact surface obtained by removing 
open disks around the banded link from $C_{\Gamma}$. 
We also have a decomposition 
$\check{E}(\Gamma)=\check{E}^u(\Gamma)\sqcup\check{E}^t(\Gamma)$. 
Here we put $\check{E}^u(\Gamma)({\rm resp.}\check{E}^t(\Gamma))
:=\{e_l \ | \ f_l\in E^u(\Gamma) \ ({\rm resp.} E^t(\Gamma))\}$. 
Now we introduce four lattices $\Lambda_0$, 
$\Lambda$, $\Lambda_0^*$ and $\Lambda^*$ as follows
\footnote{These lattices are introduced in \cite{TY} 
for a closed surface with a pants decomposition.}. 

\begin{Def}
Let $\R\langle\check{E}(\Gamma)\rangle$ be the 
vector space generated by $\check{E}(\Gamma)=\{e_l\}$ over $\R$. 
Let $\Lambda_0:=\Z\langle\check{E}(\Gamma)\rangle$ 
be the standard lattice in $\R\langle\check{E}(\Gamma)\rangle$ and 
$\Lambda$ be the lattice generated by the elements 
$\{E_1^i,E_2^i, E_3^i \ | \ i=1,\cdots,2g-2+n \}$, 
where these elements are defined  as follows; 
For each trivalent vertex $v_i$ with edges $f_{i_1}$, $f_{i_2}$ and $f_{i_3}$ 
define $E_1^i$, $E_2^i$ and $E_3^i$ by 
\begin{eqnarray*}
E_1^i&=&\frac{1}{2}\left(-e_{i_1}+e_{i_2}+e_{i_3}\right)\\ 
E_2^i&=&\frac{1}{2}\left(e_{i_1}-e_{i_2}+e_{i_3}\right)\\
E_3^i&=&\frac{1}{2}\left(e_{i_1}+e_{i_2}-e_{i_3}\right).\\
\end{eqnarray*} 
If a vertex 
$v_i$ has only two edges $f_{i_1}$ and $f_{i_2}=f_{i_3}$ 
then we interpret as 
\begin{eqnarray*}
E^i_1&:=&-\frac{1}{2}e_{i_1}+e_{i_2}\\
E^i_2&=&E^i_3:=\frac{1}{2}e_{i_1}. 
\end{eqnarray*}

Let $\R\langle E^t(\Gamma)\rangle$ be the $\R$-vector space generated by 
$E^t(\Gamma)$. 
Let $\Lambda_0^*=\Z\langle E^t(\Gamma)\rangle$ be the standard lattice 
in $\R\langle E^t(\Gamma)\rangle$ and 
$\Lambda^*$ be the sublattice of $\Lambda_0^*$ defined by 
$$
\Lambda^*:=\left\{
\sum_ln_lf_l\in \Lambda_0^* \ | \ n_{i_1}+n_{i_2}+n_{i_3}\in 2\Z\right\}. 
$$
There is a natural pairing between $\R\langle\check{E}(\Gamma)\rangle$ and 
$\R\langle E^t(\Gamma)\rangle$ denoted by $\cdot$, which is defined by 
$e_l\cdot f_{l'}=\delta_{l,l'}$. 
Note that this pairing is non-degenerate except the subspace 
generated by $\check{E}^u(\Gamma)$. 
\end{Def}

\begin{Rem}
Note that $\Lambda$ is a sublattice of $\Lambda_0$ 
with $\Lambda/\Lambda_0\cong (\Z/2)^{2g-2+2n-n'}$, where $n'=n$ ($n\geq 1$), $1$ ($n=0$). 
This follows from the facts 
$E^i_1\equiv E^i_2\equiv E^i_3 \bmod \Lambda_0$ 
for $i=1,\cdots,2g-2+n$ 
and $\sum_{i}E_1^i\equiv 0\bmod \Lambda_0$ if $n=0$.
\end{Rem}

\begin{Prop}\label{H1}
We have the followings; 
\begin{enumerate}
\item There is a canonical isomorphism 
$\Lambda^*/2\Lambda_0^*\cong H_1(\Gamma;\Z/2)(\cong (\Z/2)^g)$. 
\item 
The inclusion $\Gamma_0\subset C_{\Gamma}$ 
induces a decomposition $H_1(C_{\Gamma}^{\circ};\Z/2)\cong\Lmd\oplus\lmd$. 
\end{enumerate}
\end{Prop}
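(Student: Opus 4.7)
The plan is to treat the two parts in turn. For (1), since $\Gamma$ has no $2$-cells, $H_1(\Gamma;\Z/2)$ coincides with the space of mod $2$ cycles. A $1$-chain $\sum_l n_l f_l$ ranging over \emph{all} edges is a cycle if and only if the sum of coefficients of incident edges vanishes mod $2$ at each vertex. At a univalent vertex this forces the coefficient of the single incident edge to be zero, so every cycle is supported on $E^t(\Gamma)$; on such chains the cycle condition at each trivalent vertex $v_i$ is precisely $n_{i_1}+n_{i_2}+n_{i_3}\in 2\Z$, the defining condition of $\Lambda^*\subset \Lambda_0^*$. Reduction modulo $2$ therefore identifies $\Lambda^*/2\Lambda_0^*$ with $H_1(\Gamma;\Z/2)$, of $\Z/2$-rank $g$ since $\chi(\Gamma)=(2g-2+n)+n-(3g-3+2n)=1-g$.

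For (2), I define each summand geometrically. For the longitude summand I compose the isomorphism of (1) with the map induced by the inclusion $\Gamma_0\hookrightarrow C_\Gamma^{\circ}$; this makes sense because any $1$-cycle on $\Gamma$ is automatically supported on $E^t(\Gamma)$ and hence stays inside $C_\Gamma^{\circ}$. For the meridian summand I send a generator $e_l\in\Lambda_0$ to the class of the pants curve $\mu_l$ of the pants decomposition of $C_\Gamma^{\circ}$ dual to $\Gamma$---an interior meridian when $f_l\in E^t(\Gamma)$, and the boundary circle of $C_\Gamma^{\circ}$ around the corresponding link component when $f_l\in E^u(\Gamma)$. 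This map manifestly kills $2\Lambda_0$, and it kills $2\Lambda$ as well because each generator $2E_r^i=\pm e_{i_1}\pm e_{i_2}\pm e_{i_3}$ reduces mod $2$ to $e_{i_1}+e_{i_2}+e_{i_3}$, while the three boundary components of the pair of pants at $v_i$ sum to zero in $H_1(C_\Gamma^{\circ};\Z/2)$. Hence it descends to a well-defined map $\Lmd\to H_1(C_\Gamma^{\circ};\Z/2)$.

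To show the two maps assemble into a direct sum decomposition I use the handlebody retraction $B_\Gamma\simeq\Gamma_0$. The composition
$$
H_1(\Gamma_0;\Z/2)\longrightarrow H_1(C_\Gamma^{\circ};\Z/2)\longrightarrow H_1(B_\Gamma;\Z/2)=H_1(\Gamma_0;\Z/2)
$$
is the identity, so the longitude map is split injective. On the other hand every pants curve $\mu_l$ bounds a disk in $B_\Gamma$: the interior meridians bound the obvious meridian disks of the handlebody, while the boundary circles $\partial_l$ for $l\in E^u(\Gamma)$ bound the small disks whose removal produces $C_\Gamma^{\circ}$ from $C_\Gamma\subset B_\Gamma$. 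Consequently the image of $\Lmd$ lies in the kernel $K$ of the retraction, and in particular $\Lmd\cap\lmd=0$ inside $H_1(C_\Gamma^{\circ};\Z/2)$. The generators $e_l$ surject onto the standard generating set of $K$, and a dimension count using the preceding remark gives $\dim_{\Z/2}\Lmd=g-1+n'=\dim_{\Z/2}K$, so the meridian map is an isomorphism onto $K$. Combined with the longitude splitting this yields the decomposition, valid in both cases $n\geq 1$ and $n=0$.

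The main difficulty is the uniform treatment of $E^u(\Gamma)$: one has to verify that the pants relation $\mu_{i_1}+\mu_{i_2}+\mu_{i_3}=0$ continues to hold in $H_1(C_\Gamma^{\circ};\Z/2)$ when an incident edge is univalent (so the corresponding ``meridian'' is really a boundary circle of $C_\Gamma^{\circ}$), and the closed case $n=0$ needs separate accounting because of the extra global relation $\sum_i E_1^i\equiv 0\pmod{\Lambda_0}$ reflected in the convention $n'=1$.
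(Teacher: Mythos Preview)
Your proof is correct and follows essentially the same approach as the paper. The only organizational difference is that the paper first asserts injectivity of $\Lmd\to H_1(C_\Gamma^{\circ};\Z/2)$ and then identifies the cokernel with $\lmd$ by a rank count, whereas you make the handlebody retraction $B_\Gamma\simeq\Gamma_0$ explicit to split off $\lmd$ first and then identify $\Lmd$ with the kernel $K$; both arguments rest on the same geometric facts (pants boundaries sum to zero, meridians bound in $B_\Gamma$) and the same dimension count $\dim_{\Z/2}\Lmd=g+n'-1$.
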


\begin{proof}
1. The required isomorphism is given as follows. 
By definition each $\lambda\in\lmd$ can be represented by an  
element in $\Lambda^*$ which has two (or no) nontrivial 
entries at each trivalent vertex. Putting edges corresponding 
to nontrivial entries in $\lambda$ together we obtain a cycle in $\Gamma$. 
Conversely for a given cycle in $\Gamma$ we have an element in $\lmd$ 
whose nontrivial entries correspond with edges on the cycle. 

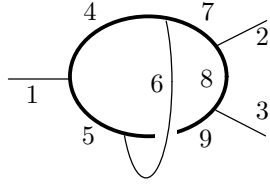
\begin{figure}[htb]
\centering
\unitlength 0.1in
\begin{picture}( 13.5200, 10.0000)(  7.5000,-14.7800)
%
\special{pn 20}%
\special{ar 1482 946 410 314  0.0066985 6.2831853}%
%
\special{pn 8}%
\special{pa 1072 960}%
\special{pa 750 960}%
\special{fp}%
%
\special{pn 8}%
\special{pa 1840 786}%
\special{pa 2102 654}%
\special{fp}%
%
\special{pn 8}%
\special{pa 1836 1124}%
\special{pa 2096 1258}%
\special{fp}%
\put(8.4200,-10.8300){\makebox(0,0)[lb]{1}}%
\put(20.4500,-7.8100){\makebox(0,0)[lb]{2}}%
\put(20.4500,-11.7000){\makebox(0,0)[lb]{3}}%
\put(11.4400,-6.4800){\makebox(0,0)[lb]{4}}%
\put(11.3400,-12.9800){\makebox(0,0)[lb]{5}}%
\put(14.9400,-10.3000){\makebox(0,0)[lb]{6}}%
\put(17.5900,-6.4800){\makebox(0,0)[lb]{7}}%
\put(17.5300,-10.0200){\makebox(0,0)[lb]{8}}%
\put(17.4800,-13.1800){\makebox(0,0)[lb]{9}}%
%
\special{pn 8}%
\special{sh 0}%
\special{pa 1518 1198}%
\special{pa 1630 1198}%
\special{pa 1630 1310}%
\special{pa 1518 1310}%
\special{pa 1518 1198}%
\special{ip}%
%
\special{pn 8}%
\special{ar 1470 974 136 504  5.6026641 6.2831853}%
\special{ar 1470 974 136 504  0.0000000 2.5453935}%
\end{picture}%
\caption{The thick line corresponds to $\lambda=f_4+f_5+f_7+f_8+f_9$}
\label{H1Gamma}
\end{figure}

\noindent
2. The elements in $2\Lambda$ correspond to 
boundaries of the pants decomposition of the surface $C_{\Gamma}^{\circ}$ 
up to orientation. Hence there is a natural map 
$\Lmd\to H_1(C_{\Gamma}^{\circ};\Z/2)$, 
and one can check that this is an inclusion. 
Then the rank of cokernel of this inclusion is equal to $g$ 
because we have that $\Lmd\cong(\Z/2)^{g+n'-1}$. 
On the other hand we have a natural map 
$\lmd\cong H_1(\Gamma_0;\Z/2)\to H_1(C_{\Gamma}^{\circ};\Z/2)$ 
via the inclusion $\Gamma_0\subset C_{\Gamma}$ . 
It induces an injection $\lmd\to H_1(C_{\Gamma}^{\circ};\Z/2)/(\Lmd)$, 
and this is an isomorphism because these have the same rank $g$. 
This construction gives a splitting of the exact sequence 
$0\to\Lmd\to H_1(C_{\Gamma}^{\circ};\Z/2)\to 
H_1(C_{\Gamma}^{\circ};\Z/2)/(\Lmd)\cong\lmd\to 0$. 
\end{proof}

\begin{Rem}
Note that the subgroup $\Lambda_0^u/2\Lambda^u$ is 
isomorphic to the subgroup of $H_1(C_{\Gamma}^{\circ};\Z/2)$ 
generated by boundaries of $C_{\Gamma}^{\circ}$,
where $\Lambda^u_0$ is the lattice generated by $\check{E}^u(\Gamma)$ 
and we put $2\Lambda^u:=\Lambda_0^u\cap2\Lambda$.  
\end{Rem}

\begin{Rem}\label{sympform}
The isomorphism $H_1(C_{\Gamma}^{\circ};\Z/2)\cong \Lmd\oplus\lmd$ is 
compatible with the intersection pairing. Namely, 
the mod $2$ intersection pairing between meridians and 
longitudes coincides with the natural pairing 
between $\Lmd$ and $\lmd$ induced from the pairing 
between $\Lambda_0$ and $\Lambda_0^*$. 
We denote this pairing by 
$\cdot:\Lmd\oplus\lmd\to \Z/2, \ (\mu,\lambda)\mapsto \mu\cdot\lambda$. 
\end{Rem}

\section{Representation matrix of the Heisenberg action}
In this section we describe the Heisenberg action 
in \cite{BHMV} explicitly in terms of a ribbon graph 
and its admissible colorings. 
We first review the construction of the Heisenberg type group in \cite{BHMV}. 
For an oriented surface $C$, 
let $H(C)$ be the direct product of 
$\Z$ and $H_1(C;\Z)$ with multiplication given by 
$(n,x)(m,y):=(n+m+x\cdot y, x+y)$, where $x\cdot y$ is the 
intersection number of $x$ and $y$. 
We denote $(n,0)$ by $u^n$ for $n\in \Z$ 
and $[x]=(0,x)$ for $x\in H_1(C;\Z)$. 

\begin{Def}
Let $\E(C)$ be the quotient of $H(C)$  by the 
subgroup generated by $u^4$ and elements $[2x]$. 
One can check that $\E(C)$ is a central extension 
$$
\Z/4\to \E(C)\to H_1(C;\Z/2). 
$$
\end{Def}

For the surface $C_{\Gamma}$ associated with a ribbon graph $\Gamma$, 
we denote $\E(C_{\Gamma})$ by $\E(\Gamma)$. 
We can define a map 
$\tau:\Lmd\oplus\lmd\to \E(\Gamma)$ as follows. 
We first note that $\mu\in\Lmd$ is represented by 
a disjoint union of simple closed curves $\tilde\mu$ on $C_{\Gamma}$, 
and it determined an element in $H_1(C_{\Gamma};\Z)$ up to sign. 
The induced element $\tau(\mu):=[\tilde\mu]\in\E(\Gamma)$ depends only on $\mu$ 
since $\tilde\mu\cdot\tilde\mu=0$ and hence $[\tilde\mu]=[-\tilde\mu]$. 
Similarly we have an element $\tau(\lambda)\in \E(\Gamma)$. 
We define $\tau(\mu,\lambda):=u^{\mu\circ\lambda}\tau(\lambda)\tau(\mu)$, 
where $\mu\circ\lambda$ is the geometric intersection number 
of $\mu$ and $\lambda$. 

\begin{Rem}
Put $\E'(\Gamma):=\Z/2\times\left(\Lmd\oplus\lmd\right)$ as a set. 
Define a group structure on $\E'(\Gamma)$ by 
$$
(c_1,\mu_1,\lambda_1)\cdot(c_2,\mu,\lambda_2)=
(c_1c_2(-1)^{\lambda_2\cdot\mu_1},\mu_1+\mu_2,\lambda_1+\lambda_2),
$$where $\cdot$ is the natural pairing 
between $\Lmd$ and $\lmd$. 
Then $\E'(\Gamma)$ is a central extension 
$$
\Z/2\to \E'(\Gamma)\to \Lmd\oplus\lmd. 
$$
Then one can check that $\E'(\Gamma)$ is a reduction of 
$\E(\Gamma)$ to a $\Z/2$-extension. 
Namely the $\Z/4$-extension induced from $\E'(\Gamma)$ 
via the natural inclusion $\Z/2\hookrightarrow\Z/4$ 
is isomorphic to $\E(\Gamma)$. 
\end{Rem}

Fix a non-negative integer $k$ and a boundary coloring $j'=(j'_l)$. 
We have the TQFT-module $V_{2k+4}(C_{\Gamma})=V_{2k+4}(C_{\Gamma};j')$,
which has a basis parameterized by 
the finite set of all $k$-admissible colorings. 
By previous computations, 
we have the following descriptions of 
the involutions of meridian cycles and longitude cycles; 

\vspace{0.5cm}

\noindent
{\bf Meridian.} 
For $\mu\in \Lmd$ and a $k$-admissible coloring $j$ 
one has 
$$
\tau(\mu):|j\rangle\mapsto (-1)^{j_{\mu}}|j\rangle, 
$$where 
$j_{\mu}$ is defined by 
$j_{\mu}:=\sum_{\epsilon_l\neq 0}j_l$ for $\mu=\sum_l\epsilon_le_l $. 

\vspace{0.5cm}

\noindent
{\bf Longitude.} 
For $\lambda\in\lmd$ and a $k$-admissible coloring $j$ 
one has 
$$
\tau(\lambda):|j\rangle\mapsto\delta_j(\lambda)|\lambda\cdot j\rangle, 
$$where the coefficient $\delta_j(\lambda)$ can be 
computed as in the previous section and 
it satisfies the {\it external edge condition} 
(Definition~\ref{defextedgecond}). 

\medskip

\noindent
Combining these descriptions 
and the action of central elements (See \cite[Proposition~7.5]{BHMV}.) 
we have the following description 
of $\E(\Gamma)$-action on $V_{2k+4}(C_{\Gamma})$; 
\begin{eqnarray*}
\rho^{(k)}:\E(\Gamma)&\to& GL(V_{2k+4}(C_{\Gamma}))\\ 
u^m\tau(\mu,\lambda)&\mapsto& 
\left(|j\rangle\mapsto 
A^{(k+2)^2(m+\mu\circ\lambda)}
(-1)^{(k+1)(m+\mu\circ\lambda)+j_\mu}\delta_j(\lambda)|\lambda\cdot j\rangle\right).
\end{eqnarray*}

\begin{Rem}
If $k$ is an even number the one has that 
$\rho^{(k)}(\tau(a))\rho^{(k)}(\tau(b))=
(-1)^{\frac{k}{2}a\cdot b}\rho^{(k)}(\tau(a+b))$ 
by direct computation. 
In particular in the case of even $k$, 
the representation $\rho^{(k)}$ is an abelian. 
\end{Rem}

\section{Trace of the Heisenberg action }

In this section we compute the trace of the 
involutions on the TQFT-module by making use of 
the previous results. 
The formula for closed surfaces with empty link
is obtained in \cite{AndMas} and \cite{BHMV}. 
Our computation works uniformly for 
surfaces with non-empty link,  and 
the formula is the same as the formula 
for a surface without banded link up to 
some factor ($\in \{-1,0,1\}$) which is 
determined by the boundary coloring. 


Let $\Gamma$ be a ribbon unitrivalent graph. 
Fix a boundary coloring $j'$. 
For a non-negative integer $k$ 
we denote by $QCG_k(\Gamma;j')$ 
the set of all $k$-admissible coloring of $\Gamma$ 
compatible with the given boundary coloring $j'$. 
By the description of $\E(\Gamma)$-action, 
the trace of $\rho^{(k)}(\tau(\mu,\lambda))$ is 
given by 
$$
Tr(\rho^{(k)}(\tau(\mu,\lambda));V_{2k+4}(C_{\Gamma};j'))=
\sum_{j\in QCG_k^{\lambda}(\Gamma; j')}
A^{(k+2)^2{\mu\circ\lambda}}(-1)^{(k+1)\mu\circ\lambda+j_{\mu}}\delta_j(\lambda),
$$where $QCG_k^{\lambda}(\Gamma; j')$ is 
the set of all $k$-admissible colorings 
fixed by the action of $\lambda$ : 
$$
QCG_k^{\lambda}(\Gamma; j'):=\{j\in QCG_k(\Gamma;j') \ | \ \lambda\cdot j= j\}.
$$ 
In this section we compute the right hand side directly 
and show the following formula. 

\begin{Thm}\label{trace}
We have the following trace formula; 
$$
Tr(\rho^{(k)}(\tau(\mu,\lambda));V_{2k+4}(C_{\Gamma};j'))=
\gamma(j')
\frac{1+(-1)^k}{2}\left(\frac{k+2}{2}\right)^{g-1}
$$for $\mu\in\Lmd$ and $\lambda(\neq 0)\in\lmd$ or 
for $\mu\notin \Lambda^u_0/2\Lambda^u$ and $\lambda=0$, 
where $\gamma(j')\in\{-1,0,1\}$ is defined below (Definitions/Notations~(3)). 
For $\mu=\sum\epsilon_le_l\in \Lambda^u_0/2\Lambda^u$, 
one has 
$Tr(\rho^{(k)}(\mu,0);V_{2k+4}(C_{\Gamma};j'))=
(-1)^{\sum\epsilon_lj_l'}{^{\#}QCG_k(\Gamma;j')}$. 
\end{Thm}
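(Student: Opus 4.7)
The plan is to evaluate directly the displayed trace formula
\[
Tr(\rho^{(k)}(\tau(\mu,\lambda)))=A^{(k+2)^2\mu\circ\lambda}(-1)^{(k+1)\mu\circ\lambda}\sum_{j\in QCG_k^{\lambda}(\Gamma;j')}(-1)^{j_{\mu}}\delta_j(\lambda)
\]
by substituting the value of $\delta_j(\lambda)$ from Proposition~\ref{skeinextedge} and performing a combinatorial count that amounts to a Verlinde-type evaluation. First I would dispose of the boundary-supported case $\lambda=0$, $\mu=\sum\epsilon_le_l\in\Lambda_0^u/2\Lambda^u$. Here $\delta_j(0)=1$ and $QCG_k^0(\Gamma;j')=QCG_k(\Gamma;j')$, and since every edge with $\epsilon_l\neq 0$ has a univalent endpoint, its coloring $j_l=j_l'$ is fixed by the boundary data. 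Thus $(-1)^{j_\mu}=(-1)^{\sum\epsilon_lj_l'}$ is constant, and the sum factors out to give the second formula of the theorem. Next I would dispose of the case $\lambda\neq 0$ with $k$ odd: the condition $\lambda\cdot j=j$ forces $j(f_l)=k/2$ on every edge of $\lambda$, which is impossible for odd $k$, so $QCG_k^\lambda(\Gamma;j')=\emptyset$ and the trace vanishes, matching the factor $(1+(-1)^k)/2$.

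The core argument is for $\lambda\neq 0$ with $k$ even, and in parallel for $\lambda=0$ with $\mu\notin\Lambda_0^u/2\Lambda^u$. In the first situation I would invoke the external edge condition to replace $\delta_j(\lambda)$ by $(-1)^{\sum_{f_l\in\text{Ex}_\lambda}j_l/2}$ and rewrite the remaining sum as
\[
\sum_{j:\,j|_\lambda\equiv k/2}(-1)^{j_\mu+\sum_{f_l\in\text{Ex}_\lambda}j_l/2}.
\]
I would then cut $\Gamma$ along the edges of $\lambda$, producing a smaller unitrivalent graph whose dual surface has genus $g-1$ (one handle is surgered open along $\lambda$). The remaining free colorings are $k$-admissible colorings of this cut graph whose boundary data at the new univalent vertices is even and $\leq k$; summing over them with the inherited signs yields, via the Verlinde formula, the factor $((k+2)/2)^{g-1}$. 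In the parallel subcase $\lambda=0$, $\mu\notin\Lambda_0^u/2\Lambda^u$, the meridian $\mu$ has a nonzero component on some interior edge $f_{l_0}\in E^t(\Gamma)$; summing first over $j(f_{l_0})$ with the other colorings held fixed produces a signed count that, after an elementary parity argument and the Verlinde formula applied to $\Gamma$ cut at $f_{l_0}$, collapses the dimension by a factor of $(k+2)/2$, again giving $((k+2)/2)^{g-1}$.

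The main obstacle is the sign bookkeeping: one must combine the quantum phase $A^{(k+2)^2\mu\circ\lambda}(-1)^{(k+1)\mu\circ\lambda}$, the contribution $(-1)^{j_\mu}$ restricted to $j|_\lambda\equiv k/2$, and the external edge sign $(-1)^{\sum j_l/2}$ into precisely the claimed $\gamma(j')\in\{-1,0,1\}$. This requires a careful case analysis depending on which external edges of $\lambda$ carry nontrivial boundary colorings and on the parity of $\sum j_l'/2$, and one must check independence of the chosen pants decomposition, so that $\gamma(j')$ depends only on the combinatorial data $(\mu,\lambda,j')$ and not on how $\Gamma$ is drawn. The Verlinde reduction itself, once the signs are pinned down, is standard skein-theoretic manipulation together with the elementary counting arguments alluded to in the introduction.
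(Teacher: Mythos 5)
Your overall strategy coincides with the paper's: dispose of the boundary meridians and of the odd-$k$, $\lambda\neq 0$ case, then for even $k$ replace $\delta_j(\lambda)$ by the external edge sign, cut the graph, and finish with the Verlinde formula. But three genuine gaps remain. First, you never treat the case $k$ odd, $\lambda=0$, $\mu\notin\Lambda_0^u/2\Lambda^u$, where the factor $(1+(-1)^k)/2$ still forces the trace to vanish even though $QCG_k^{\lambda}(\Gamma;j')=QCG_k(\Gamma;j')$ is nonempty; the paper's Lemma~\ref{oddtrace} handles this with a sign-reversing bijection $j\mapsto\lambda_\mu\cdot j$ for a cycle $\lambda_\mu$ with $\mu\cdot\lambda_\mu\equiv 1$, using that $(\lambda_\mu\cdot j)_\mu\equiv j_\mu+k\,(\lambda_\mu\cdot\mu)\bmod 2$ flips parity when $k$ is odd. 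Second, the ``sign bookkeeping'' you defer is not a routine afterthought but the actual content of the proof: the paper resolves it by writing $j_\mu\equiv j_{\mu(\lambda)}+\tfrac{k}{2}\,\mu\cdot\lambda \bmod 2$ on $QCG_k^{\lambda}$ (legitimate because every edge of $\lambda$ is colored $k/2$), which simultaneously cancels the prefactor $(-1)^{\frac{k}{2}\mu\circ\lambda}$ and replaces $\mu$ by a class $\mu(\lambda)$ supported away from $\lambda$; without this substitution the $\mu$-dependence does not visibly leave your sum. (Your worry about independence of the pants decomposition is a non-issue: $\gamma(j')$ equals $(-1)^{\sum j_l'/2}$ when all $j_l'$ are even and $0$ otherwise, so it depends only on the boundary coloring.)

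Third, the assertion that summing over the cut graph ``via the Verlinde formula'' yields $\left(\frac{k+2}{2}\right)^{g-1}$ hides the mechanism that produces this power. Cutting along $\lambda$ does not hand you a genus-$(g-1)$ Verlinde number directly. The paper splits $\Gamma$ into the piece $\Gamma(\lambda)$ carrying $\lambda$, on which the fixed colorings are counted freely (one factor of $\frac{k+2}{2}$ for each of the $g_1-1$ $\lambda$-internal edges), and the complementary piece $\Gamma'(\lambda)_{\mu}$ cut along the support of $\mu(\lambda)$; the signed sum over the new boundary colorings is then evaluated with the orthogonality relation $\sum_{j'}(-1)^{j'}\sin^2\frac{(j'+1)\nu\pi}{k+2}=\frac{k+2}{2}\,\delta_{\nu,(k+2)/2}$, which collapses the Verlinde sum to its single middle term $\nu=\frac{k+2}{2}$. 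That collapse is precisely where both the power $\left(\frac{k+2}{2}\right)^{g-1}$ and the factor $\gamma(j')$ (through the surviving terms $\sin\frac{(j_l'+1)\pi}{2}$) come from, and it must be stated and proved, not alluded to. For the same reason, in your parallel subcase $\lambda=0$, $\mu\notin\Lambda_0^u/2\Lambda^u$ you must cut at \emph{all} interior edges in the support of $\mu$ at once, not a single $f_{l_0}$, since $j_\mu$ couples all of them.
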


The last part follows from the description 
of the action of a cycle around a component of the banded link. 
The case for odd $k$ follows from the following lemma.  
\begin{Lem}\label{oddtrace}
If $k$ is an odd number, then 
we have $Tr(\rho^{(k)}(\tau(\mu,\lambda));V_{2k+4}(C_{\Gamma};j'))=0$ 
for $\mu\in\Lmd$ and $\lambda(\neq 0)\in\lmd$ or 
$\mu\notin\Lambda_0^u/2\Lambda^u$ and $\lambda=0$. 
\end{Lem}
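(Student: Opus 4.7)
The plan is to use the explicit trace formula
\[
Tr(\rho^{(k)}(\tau(\mu,\lambda));V_{2k+4}(C_{\Gamma};j'))=\sum_{j\in QCG_k^{\lambda}(\Gamma;j')}A^{(k+2)^2\mu\circ\lambda}(-1)^{(k+1)\mu\circ\lambda+j_\mu}\delta_j(\lambda)
\]
displayed just before the lemma, and to treat the two subcases permitted by the hypothesis separately.

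First I would dispose of the case $\lambda\neq 0$. The relation $\lambda\cdot j=j$ built into the definition of $QCG_k^{\lambda}(\Gamma;j')$ demands $j(f_l)=k-j(f_l)$, i.e.\ $j(f_l)=k/2$, on every edge $f_l$ of $\lambda$. Since $\lambda\neq 0$ contains at least one edge and $k/2\notin\Z$ when $k$ is odd, the index set is empty and the trace is zero for free. (This is precisely what the footnote after Proposition~\ref{skeinextedge} anticipates.)

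For the remaining case $\lambda=0$ with $\mu\notin\Lambda_0^u/2\Lambda^u$ one has $QCG_k^{0}=QCG_k$, $\mu\circ 0=0$, and $\delta_j(0)=1$ (empty product, $n_0=0$), so the formula collapses to $\sum_{j\in QCG_k(\Gamma;j')}(-1)^{j_\mu}$. My plan is to construct a sign-reversing involution of $QCG_k(\Gamma;j')$ coming from a longitude. By Proposition~\ref{H1} and Remark~\ref{sympform}, the mod $2$ intersection pairing on $H_1(C_{\Gamma}^{\circ};\Z/2)\cong\Lmd\oplus\lmd$ has radical spanned by the boundary classes, and these make up exactly $\Lambda_0^u/2\Lambda^u\subset\Lmd$; consequently the induced pairing on $(\Lmd/(\Lambda_0^u/2\Lambda^u))\times\lmd$ is nondegenerate, so since $\mu\notin\Lambda_0^u/2\Lambda^u$ one may choose $\lambda'\in\lmd$ with $\mu\cdot\lambda'=1$. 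The action of Definition~\ref{Gammaaction} then provides an involution $j\mapsto\lambda'\cdot j$ of $QCG_k(\Gamma;j')$, which preserves the boundary coloring because $\lambda'$ is supported on internal edges. A direct parity count gives
\[
(\lambda'\cdot j)_\mu\equiv j_\mu+k(\mu\cdot\lambda')\equiv j_\mu+k\pmod 2,
\]
so for odd $k$ the sign $(-1)^{j_\mu}$ is negated by this involution, and a change of variable in the sum yields $\sum_j(-1)^{j_\mu}=-\sum_j(-1)^{j_\mu}$, forcing the sum to vanish. The most delicate ingredient is the nondegeneracy assertion used to produce the cycle $\lambda'$, which is exactly what singles out the hypothesis $\mu\notin\Lambda_0^u/2\Lambda^u$ in the lemma; everything else is purely combinatorial bookkeeping.
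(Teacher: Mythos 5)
Your proposal is correct and follows essentially the same route as the paper: emptiness of $QCG_k^{\lambda}(\Gamma;j')$ for $\lambda\neq 0$ since $k/2\notin\Z$, and for $\lambda=0$ the sign-reversing involution $j\mapsto\lambda_{\mu}\cdot j$ via a cycle pairing nontrivially with $\mu$, together with the parity relation $(\lambda_{\mu}\cdot j)_{\mu}\equiv j_{\mu}+k(\mu\cdot\lambda_{\mu})\bmod 2$. Your added justification for the existence of such a cycle (nondegeneracy of the induced pairing modulo the boundary classes) is a point the paper leaves implicit, but otherwise the arguments coincide.
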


\begin{proof}
First note that 
if $\lambda\cdot j=j$ 
then $j_l=k/2$ 
for all $l$ with $\lambda_l\neq 0$. 
But $k/2$ is not an integer if $k$ is odd. 
This implies that for an odd number $k$ and $\lambda\neq 0$ 
we have $QCG_k^{\lambda}(\Gamma;j')=\emptyset$, and hence 
$Tr(\rho^{(k)}(\mu,\lambda);V_{2k+4}(C_{\Gamma};j'))=0$. 
Now we assume that $k$ is an odd number and $\lambda=0$, 
$\mu\notin\Lambda^u_0/2\Lambda^u$. 
In this case we have 
$$
Tr(\rho^{(k)}(\mu,0);V_{2k+4}(C_{\Gamma};j'))
=\sum_{j\in QCG_k^{\lambda}(\Gamma; j')}(-1)^{j_{\mu}}
=^{\#}\{j\in QCG_{k} \ | \ j_{\mu}\in 2\Z \ \}
-^{\#}\{j\in QCG_{k} \ | \ j_{\mu}\notin 2\Z \ \}. 
$$
Take and fix a cycle $\lambda_{\mu}\in\lmd$ on $\Gamma$ 
with $\mu\cdot\lambda_{\mu}\equiv 1\bmod 2$. 
Then a map 
\begin{eqnarray*}
\{j\in QCG_k \ | \ j_{\mu}\in 2\Z\}&\longleftrightarrow&
\{j\in QCG_k \ | \ j_{\mu}\notin 2\Z\}\\
j&\longleftrightarrow&\lambda_{\mu}\cdot j
\end{eqnarray*}
gives a bijection if $k$ is an odd number because of the relation 
$$
(\lambda_{\mu}\cdot j)_{\mu}=
k\lambda_{\mu}\cdot\mu+j_{\mu} \bmod \Z, 
$$
in particular we obtain 
$Tr(\rho^{(k)}(\mu,0);V_{2k+4}(C_{\Gamma};j'))=0$. 
\end{proof}

Now we make several preparations to show the theorem for even $k$. 
Here after we assume that $k$ is an even natural number. 
We introduce several notations. 
(See also Example~\ref{notations}.)

\vspace{0.5cm}

\noindent
{\bf Definitions/Notations.}
\begin{enumerate}
\item
For given $(\mu,\lambda)\in \Lmd\oplus\lmd$ 
take and fix representatives $\mu=\sum_{l}\epsilon_le_l$  
and $\lambda=\sum_{l}\epsilon'_lf_l$ (we denote them by the same letters), 
and we define $\mu(\lambda)$ by 
$\mu(\lambda):=\sum_{l}\epsilon_l(1-\epsilon_l')e_l$. 
We consider the 
decomposition of the graph $\Gamma$ into 
$\Gamma(\lambda)\cup\Gamma'(\lambda)$ and $\Gamma(\lambda)\cup\Gamma'(\lambda)_{\mu}$
as follows. 

\begin{eqnarray*}
&&\Gamma(\lambda):=\{f_l,v_i,w_i \ | \ f_l \ {\rm with} \ \lambda_l\neq 0 \ 
{\rm or} \ f_l\in \text{Ex}_{\lambda}\cup \text{In}_{\lambda},  \ {\rm and} \ 
v_i \ {\rm or} \ w_i  \ {\rm is \ a \ vertex \ of} \ f_l\},\\
&&\Gamma'(\lambda):=\{f_l,v_i,w_i \ | \ f_l \ {\rm with} \ \lambda_l= 0 \ 
{\rm or} \ f_l\in \text{Ex}_{\lambda},  \ {\rm and} \
v_i \ {\rm or} \ w_i  \ {\rm is \ a \ vertex \ of} \ f_l\}.\\
&&\Gamma'(\lambda)_{\mu} \ ; \ {\rm the \ graph \ 
obtained \ by \ cutting \ the \ edges \ in} \ 
\Gamma'(\lambda) \ {\rm with} \ \mu(\lambda)_l\neq 0.
\end{eqnarray*}

\item
For a given graph $\cdot$,  we denote by $E^u(\cdot)$ the set of edges which have 
one univalent vertex and by $E^t(\cdot)$ the set of edges 
which have no univalent vertex. 
Note that $E^u(\Gamma(\lambda))=\text{Ex}_{\lambda}$. 
We define the subset of edges as follows; 

\begin{eqnarray*}
&& E^u_{\lambda}:=\text{Ex}_{\lambda}\cap E^u(\Gamma)\subset E^u(\Gamma(\lambda))\\ 
&& E^t_{\lambda}:=\text{Ex}_{\lambda}\cap E^t(\Gamma)\subset E^u(\Gamma(\lambda))\\
&& \overline{E^u_{\lambda}}:=E^u(\Gamma)- E^u_{\lambda}
\subset E^u(\Gamma'(\lambda))\\
&& E^u_{\mu}:=E^u(\Gamma'(\lambda)_{\mu})
-\left(\text{Ex}_{\lambda}\cup\overline{E^u_{\lambda}}\right)
\subset E^u(\Gamma'(\lambda)_{\mu}). 
\end{eqnarray*}

\item
For a subset $E'$ of $E^u(\cdot)$ for a given graph $\cdot$, 
we denote by $j'(E')$ the set of given boundary colorings of $E'$, 
and we put $\sum j'(E'):=\sum_{f_l\in E'}j'_l$. 
We also define $\gamma(j'(E'))\in \{-1,0,1\}$ by 
$$
\gamma(j'(E')):=\left\{\begin{array}{ll}
\displaystyle{(-1)^{\sum j'(E')/2} \quad (\forall f_l\in E',\ j_l'\in 2\Z)} \\ 
0 \quad (\exists f_l\in E', \ j_l'\notin 2\Z). 
\end{array}\right.
$$

\item
We put $n_1:={^\#}E_{\lambda}^u$, 
$n_2:={^\#}\overline{E_{\lambda}^u}$ 
and we assume that $\Gamma(\lambda)$ has $3g_1-3+2(n_1+m)$ edges and 
$\Gamma'(\lambda)$ has $3g_2-3+2(n_2+m)$ edges 
(then $g=g_1+g_2+m-1$ and $m={^\#}E^t_{\lambda}$).
Note that $E_{\mu}^u$ has an even element, 
and we put $2m':={^\#}E^u_{\mu}$. 

\item
Let $QCG_k^{\lambda}(\Gamma;j'(\Gamma))$ be the subset of 
all admissible colorings for $(\Gamma;j'(\Gamma))$ fixed by 
the action of $\lambda$, i.e, 
$$
QCG_k^{\lambda}(\Gamma;j'(\Gamma))=
\{j\in QCG_k(\Gamma;j'(\Gamma)) \ | \ j_l=k/2 \ {\rm if} \ \lambda_l\neq 0\}. 
$$
Similarly we can define the subset 
$QCG_k^{\lambda}(\Gamma(\lambda);j'(\Gamma(\lambda)))
\subset QCG_k(\Gamma(\lambda);j'(\Gamma(\lambda)))$. 
\end{enumerate}

\vspace{1cm}

\begin{Ex}\label{notations}
An example of $\Gamma(\lambda)$, 
$\Gamma'(\lambda)$ and $\Gamma'(\lambda)_{\mu}$ is shown 
in Figure~\ref{exampledef}. 
We take $\mu=e_6+e_8+e_{15}$ and $\lambda=f_4+f_5+f_7+f_8+f_9$. 
In this case we have $\mu(\lambda)=e_{15}$, 
$E^u_{\lambda}=\{f_1\}$, $E^t_{\lambda}=\{f_{10},f_{11}\}$, 
$\overline{E^u_{\lambda}}=\{f_2,f_3\}$ and 
$E^u_{\mu}=\{f_{15},f_{15}\}$. 

\begin{figure}[htb]
\begin{center}
\input{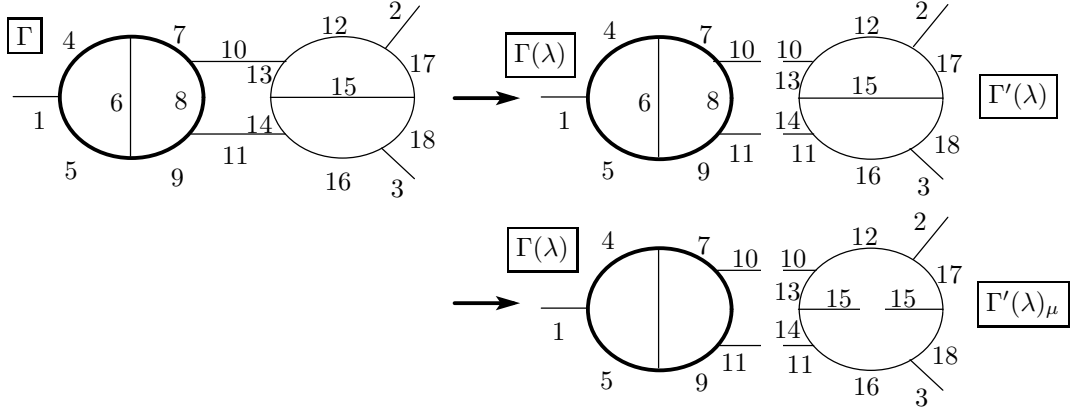}
\end{center}
\caption{Examples of $\Gamma(\lambda)$, $\Gamma'(\lambda)$ and $\Gamma'(\lambda)_{\mu}$}
\label{exampledef}
\end{figure}
\end{Ex}


Theorem~\ref{trace} is shown by the following two propositions. 


\begin{Prop} \label{main1}
We have the equality \ 
$^{\#}QCG_k^{\lambda}(\Gamma(\lambda);j'(\Gamma(\lambda)))
=\gamma(j'(\text{Ex}_{\lambda}))^2
\left(\frac{k+2}{2}\right)^{g_1-1}$.
\end{Prop}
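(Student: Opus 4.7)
The plan is to reduce the cardinality $^{\#}QCG_k^{\lambda}(\Gamma(\lambda);j'(\Gamma(\lambda)))$ to an elementary count of free colorings on the internal edges. First I would observe that any $\lambda$-fixed $k$-admissible coloring $j$ is, by definition of the $\lambda$-action, forced to assign $j_l = k/2$ to every edge of $\lambda$. Consequently, at each trivalent vertex of $\Gamma(\lambda)$ (all of which lie on $\lambda$), two of the incident edges carry the color $k/2$, while the third edge, which is either $\lambda$-external or $\lambda$-internal, carries a color $c$ satisfying the quantum Clebsch--Gordon condition together with $(k/2, k/2)$. Unpacking these conditions yields $k+c\in 2\Z$, $0\le c\le k$, and $k+c\le 2k$, so $c$ is an even integer in $\{0,2,\dots,k\}$, giving exactly $(k+2)/2$ possibilities.

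Next I would separate external and internal edges. Since $E^u(\Gamma(\lambda))=\text{Ex}_{\lambda}$, every external edge has its color prescribed by the boundary coloring $j'(\Gamma(\lambda))$. If some $j'_l$ with $f_l\in\text{Ex}_{\lambda}$ is odd, the parity condition at the corresponding vertex on $\lambda$ is violated and the set $QCG_k^{\lambda}(\Gamma(\lambda);j'(\Gamma(\lambda)))$ is empty, which matches $\gamma(j'(\text{Ex}_{\lambda}))^2=0$. If all external boundary colors are even, then $\gamma(j'(\text{Ex}_{\lambda}))^2=1$ and the external edges contribute no further constraint. Each internal edge has both endpoints on $\lambda$, each imposing the same parity/range condition, and is otherwise free, contributing independently a factor of $(k+2)/2$.

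Combining these observations gives
\[
^{\#}QCG_k^{\lambda}(\Gamma(\lambda);j'(\Gamma(\lambda)))=\gamma(j'(\text{Ex}_{\lambda}))^2\Bigl(\tfrac{k+2}{2}\Bigr)^{|\text{In}_{\lambda}|},
\]
so the proof reduces to showing $|\text{In}_{\lambda}|=g_1-1$. I would prove this by an Euler-characteristic/edge count on $\Gamma(\lambda)$: since $\lambda$ is a disjoint union of cycles, its vertex count equals its edge count $n_\lambda$; the vertices on $\lambda$ remain trivalent in $\Gamma(\lambda)$ (each incident to two $\lambda$-edges and one third edge), while the trivalent vertices of $\Gamma$ lying off $\lambda$ become univalent in $\Gamma(\lambda)$. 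Thus $\Gamma(\lambda)$ has exactly $n_\lambda$ trivalent vertices and $n_1+m$ univalent vertices, and the defining edge count $3g_1-3+2(n_1+m)$ combined with the trivalent-vertex formula $n_\lambda = 2g_1-2+(n_1+m)$ gives $|\text{In}_{\lambda}|=(3g_1-3+2(n_1+m))-n_\lambda-(n_1+m)=g_1-1$.

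The main obstacle is not analytic but combinatorial bookkeeping: one must verify that precisely the right vertices of $\Gamma$ remain trivalent in $\Gamma(\lambda)$ (so that the external/internal dichotomy catches every third edge at a $\lambda$-vertex), and that the genus $g_1$ defined by the edge-count convention is consistent with the first Betti number so that the identity $|\text{In}_\lambda|=g_1-1$ follows cleanly. Once this bookkeeping is in place the proposition is immediate.
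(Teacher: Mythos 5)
Your proof is correct and takes essentially the same route as the paper: the color $k/2$ is forced on $\lambda$, the third edge at each vertex on $\lambda$ must carry an even color in $\{0,2,\dots,k\}$ (hence $(k+2)/2$ free choices per internal edge), and $\gamma(j'(\text{Ex}_{\lambda}))^2$ is exactly the indicator that all external boundary colors are even. The only difference is that you spell out the Euler-characteristic count showing $\#\text{In}_{\lambda}=g_1-1$, which the paper asserts without proof ``by the construction''; your verification of it is correct.
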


\begin{Prop}\label{main2}
We have the equality 
$$
\sum_{j'(E^t_{\lambda}), \ j'(E^u_{\mu})}(-1)^{\sum j'(E^t_{\lambda})/2
+\sum j'(E^u_{\mu})}
{^{\#}QCG_k(\Gamma'(\lambda)_{\mu};j'(\Gamma'(\lambda)_{\mu}))}
=\gamma(j'(\overline{E^u_{\lambda}}))
\left(\frac{k+2}{2}\right)^{g_2-1+m}.
$$
\end{Prop}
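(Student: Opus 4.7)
The plan is to apply the Verlinde formula to $\#QCG_k(\Gamma'(\lambda)_\mu; j'(\Gamma'(\lambda)_\mu))$, then interchange sums and exploit key orthogonality identities for the modular $S$-matrix. Specifically, I would write
\[
\#QCG_k(\Gamma'(\lambda)_\mu; j'(\Gamma'(\lambda)_\mu)) = \sum_{a=0}^{k} \frac{\prod_{l} S_{a, j'_l}}{S_{a, 0}^{\chi}},
\]
where $S_{ab} = \sqrt{2/(k+2)}\sin\bigl(\pi(a+1)(b+1)/(k+2)\bigr)$ is the $SU(2)_k$ modular $S$-matrix and $\chi = 2g'-2+n'$ is the Euler-character exponent for the surface associated with $\Gamma'(\lambda)_\mu$, taking into account the $2m'$ extra boundary circles produced by cutting along the $m'$ edges of $\mu(\lambda)$.

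Interchanging the order of summation, the crucial tool is the identity
\[
\sum_{c \text{ even},\, 0\le c \le k} (-1)^{c/2}\, S_{a, c} \;=\; \delta_{a,\,k/2}\,\sqrt{(k+2)/2},
\]
which follows from the explicit value $S_{k/2, c} = \sqrt{2/(k+2)}\,(-1)^{c/2}$ for even $c$ (and $0$ for odd $c$), together with a direct geometric-series computation showing the sum vanishes for $a\neq k/2$. Consistency with the admissibility at the $\lambda$-vertex on the $\Gamma(\lambda)$ side (which constrains the $E^t_\lambda$-colorings to be even) justifies interpreting the $(-1)^{c/2}$ factors as zero for odd $c$. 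Whenever $m = \#E^t_\lambda \ge 1$, this identity collapses the Verlinde $a$-sum onto the single term $a = k/2$.

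At $a = k/2$ I would then evaluate the remaining pieces: the $E^u_\mu$-factors via $\sum_c (-1)^c S_{k/2, c}$, the $\overline{E^u_\lambda}$-factors via $\prod_{l \in \overline{E^u_\lambda}} S_{k/2, j'_l}$, which immediately produces $\gamma(j'(\overline{E^u_\lambda}))$ times a power of $\sqrt{2/(k+2)}$, and the Verlinde denominator $S_{k/2, 0}^{-\chi}$. Combining these contributions with the $((k+2)/2)^{m/2}$ factor extracted from the $E^t_\lambda$-sums and carefully bookkeeping all powers of $\sqrt{2/(k+2)}$ yields the exponent $g_2-1+m$ as claimed.

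The main obstacle is the topological and combinatorial bookkeeping of the Euler exponent $\chi$ for $\Gamma'(\lambda)_\mu$, combined with the relation $g = g_1 + g_2 + m - 1$: after the $m'$ cuts the graph $\Gamma'(\lambda)_\mu$ may become disconnected, so one must apply Verlinde componentwise and match the resulting product of exponents with $(k+2)/2$. A secondary subtlety appears when $m = 0$, in which case the $a$-sum is not forced to collapse onto $a=k/2$; here one must use a more refined alternating-sum identity for $\sum_c(-1)^c S_{a,c}$, whose value depends on the parity of $a$ (and on whether $k \equiv 0$ or $2 \pmod 4$), and verify that the resulting sum over all $a$ still recombines into $\gamma(j'(\overline{E^u_\lambda}))((k+2)/2)^{g_2-1}$.
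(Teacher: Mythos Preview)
Your approach is correct and parallels the paper's proof closely: both apply the Verlinde formula to $\Gamma'(\lambda)_\mu$, interchange the order of summation, and use an orthogonality identity to collapse the Verlinde label-sum onto the single term corresponding to the level-$k/2$ representation. The one genuine difference is \emph{which} signed sum is used to perform the collapse. You use the $E^t_\lambda$-sum together with the identity
\[
\sum_{c\ \mathrm{even}} (-1)^{c/2} S_{a,c} \;=\; \sqrt{\tfrac{k+2}{2}}\,\delta_{a,k/2},
\]
which is nothing but the unitarity relation $\sum_c S_{k/2,c}S_{a,c}=\delta_{a,k/2}$ once one notes that $S_{k/2,c}$ vanishes for odd $c$ and equals $\sqrt{2/(k+2)}\,(-1)^{c/2}$ for even $c$. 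The paper instead performs the $E^u_\mu$-sum first, using the identity
\[
\sum_{c=0}^{k}(-1)^{c}\sin^{2}\!\frac{(c+1)l\pi}{k+2}=\tfrac{k+2}{2}\,\delta_{l,(k+2)/2},
\]
and only afterwards carries out the $E^t_\lambda$-sum at $l=(k+2)/2$. Your ordering trades the edge case $m=0$ (which you flag) for the paper's implicit edge case $m'=0$; each is handled by reverting to the other collapse identity, so neither ordering is strictly superior. Your $S$-matrix formulation is somewhat cleaner conceptually, and you are more explicit than the paper about the disconnectedness issue; just be careful that each cut edge contributes $S_{k/2,c}^{2}$ (with a single sign $(-1)^c$) rather than $S_{k/2,c}$, which at $a=k/2$ sums to $1$ per cut edge.
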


By using these propositions 
we can prove Theorem~\ref{trace} 
by an elementary counting argument. 
\begin{proof}[Proof of Theorem~\ref{trace}]
Note that 
for an even number $k$ one has 
$A^{(k+2)^2{\mu\circ\lambda}}(-1)^{(k+1)\mu\circ\lambda+j_{\mu}}=
(-1)^{\frac{k}{2}\mu\cdot\lambda+j_{\mu}}$. 
By using this equality and the fact that $\delta_j$ satisfies the 
external edge condition, 
the trace can be computed in the following way; 
\begin{eqnarray*}
&&\sum_{j\in QCG_k^{\lambda}(\Gamma;j'(\Gamma))}
(-1)^{\frac{k}{2}\mu\cdot\lambda+j_{\mu}}\delta_j(\lambda)=
\sum_{j\in QCG_k^{\lambda}(\Gamma;j'(\Gamma))}
(-1)^{\frac{k}{2}\mu\cdot\lambda+j_{\mu}+\sum_{f_l\in \text{Ex}_{\lambda}}j_l/2}\\
&=&(-1)^{\frac{k}{2}\mu\cdot\lambda}\left(
^{\#}\{j\in QCG_{k}^{\lambda} \ | \ j_{\mu}+
\sum_{f_l\in \text{Ex}_{\lambda}}j_l/2\equiv 0\bmod 2\}
-^{\#}\{j\in QCG_{k}^{\lambda} \ | \ j_{\mu}
+\sum_{f_l\in \text{Ex}_{\lambda}}j_l/2\equiv 1\bmod 2\}\right)\\
&=&(-1)^{\frac{k}{2}\mu\cdot\lambda}\left(
^{\#}\{j\in QCG_{k}^{\lambda} \ | \ j_{\mu(\lambda)}+
\frac{k}{2}\mu\cdot\lambda+\sum_{f_l\in \text{Ex}_{\lambda}}j_l/2\equiv 0\bmod 2\}\right.
\\&&\hspace{5cm}
\left.-^{\#}\{j\in QCG_{k}^{\lambda} \ | \ j_{\mu(\lambda)}+
\frac{k}{2}\mu\cdot\lambda+\sum_{f_l\in \text{Ex}_{\lambda}}j_l/2\equiv 1\bmod 2\}\right)\\
&=&
{^{\#}}\{j\in QCG_{k}^{\lambda} \ | \ j_{\mu(\lambda)}+
\sum_{f_l\in \text{Ex}_{\lambda}}j_l/2 \equiv 0\bmod 2\}
-^{\#}\{j\in QCG_{k}^{\lambda} \ | \ j_{\mu(\lambda)}
+\sum_{f_l\in \text{Ex}_{\lambda}}j_l/2 \equiv 1\bmod 2\}\\
&=&
{^{\#}}\{j\in QCG_{k}^{\lambda} \ | \ j_{\mu(\lambda)}, 
\sum_{f_l\in \text{Ex}_{\lambda}}j_l/2 \in 2\Z\}
-^{\#}\{j\in QCG_{k}^{\lambda} \ | \ j_{\mu(\lambda)}\in 2\Z, 
\sum_{f_l\in \text{Ex}_{\lambda}}j_l/2 \notin 2\Z \}
\\&&
+{^{\#}}\{j\in QCG_k^{\lambda} \ | \ j_{\mu(\lambda)}\notin 2\Z,
\sum_{f_l\in \text{Ex}_{\lambda}}j_l/2 \notin 2\Z \}
-^{\#}\{j\in QCG_{k}^{\lambda} \ | \ j_{\mu(\lambda)}\notin 2\Z,
\sum_{f_l\in \text{Ex}_{\lambda}}j_l/2 \in 2\Z \}\\
&=&
\sum_{j'(\text{Ex}_{\lambda})}(-1)^{\sum j'(\text{Ex}_{\lambda})/2}
{^{\#}QCG_k^{\lambda}(\Gamma(\lambda);j'(\Gamma(\lambda)))}
{^{\#}}\{j\in QCG_k(\Gamma'(\lambda);j'(\Gamma'(\lambda))) \ | \ 
j_{\mu(\lambda)}\in2\Z \}\\
&&\hspace{1.2cm}-\sum_{j'(\text{Ex}_{\lambda})}(-1)^{\sum j'(\text{Ex}_{\lambda})/2}
{^{\#}QCG_k^{\lambda}(\Gamma(\lambda);j'(\Gamma(\lambda)))}
{^{\#}}\{j\in QCG_k(\Gamma'(\lambda);j'(\Gamma'(\lambda))) \ | \ 
j_{\mu(\lambda)}\notin 2\Z \}\\
\end{eqnarray*}
\begin{eqnarray*}
&=&
\left(\frac{k+2}{2}\right)^{g_1-1}
\prod_{f_l\in E^u_{\lambda}}\sin^{2+1}\frac{j_l'+1}{2}\pi
\sum_{j'(E^t_{\lambda})}(-1)^{\sum j'(E_{\lambda}^t)/2}\\
&&\times\Bigl(
{^{\#}}\{j\in QCG_k(\Gamma'(\lambda);j'(\Gamma'(\lambda))) \ | \ 
j_{\mu(\lambda)}\in 2\Z \}
-{^{\#}}\{j\in QCG_k(\Gamma'(\lambda);j'(\Gamma'(\lambda))) \ | \ 
j_{\mu(\lambda)}\notin 2\Z \}\Bigr)\\
&=&\left(\frac{k+2}{2}\right)^{g_1-1}
\prod_{f_l\in E^u_{\lambda}}\sin\frac{j_l'+1}{2}\pi
\sum_{j'(E^t_{\lambda})}(-1)^{\sum j'(E_{\lambda}^t)/2+\sum j'(E^u_{\mu})}
{^{\#}QCG_k(\Gamma'(\lambda)_{\mu};j'(\Gamma'(\lambda)_{\mu}))}
\\
&=&\left(\frac{k+2}{2}\right)^{g_1-1}
\prod_{f_l\in E^u_{\lambda}}\sin\frac{j_l'+1}{2}\pi\cdot
\left(\frac{k+2}{2}\right)^{g_2-1+m}
\prod_{f_l\in \overline{E^u_{\lambda}}}\sin\frac{j_l'+1}{2}\pi
=\gamma(j')\left(\frac{k+2}{2}\right)^{g-1}.
\end{eqnarray*}
Here we used the factorization property in the 6th 
equal sign, Proposition~\ref{main1} in the 7th equal sign 
and Proposition~\ref{main2} in the 8th equal sign. 
\end{proof}

Now we start the proof of Proposition~\ref{main1} and 
\ref{main2}. 

\begin{proof}[Proof of Proposition~\ref{main1}]
First note that a triple of integers $(k/2,k/2,j_l)$ 
satisfies the $QCG_k$-condition if and only if 
$j_l\in\{0,2,\cdots,k\}\subset 2\Z$, 
and hence one has that 
$QCG_k^{\lambda}(\Gamma(\lambda);j'(\Gamma(\lambda)))\neq\emptyset$ 
only if $j'(\Gamma(\lambda))\subset \{0,2,\cdots,k\}^{n_1+m}\subset 2\Z^{n_1+m}$. 
Next we assume that this condition is satisfied. 
By the construction there are $g_1-1$ edges in 
$\text{In}_{\lambda}\subset \Gamma(\lambda)$. 
On the other hand elements in 
$QCG_k^{\lambda}(\Gamma(\lambda);j'(\lambda)))$ 
have coloring $k/2$ at edges on $\lambda$
and integer colorings $0,\cdots,k$ at edges in $\text{In}_{\lambda}$. 
In addition such integer coloring at edges in $\text{In}_{\lambda}$ 
can be taken arbitrary. 
This implies that there are $\left(\frac{k+2}{2}\right)^{g_1-1}$ 
elements in $QCG_k^{\lambda}(\Gamma(\lambda);j'(\Gamma(\lambda)))$. 
\end{proof}

To show Proposition~\ref{main2}, we use the following 
cerebrated Verlinde formula. 

\medskip

\noindent
{\bf Verlinde formula.} \quad 
 Let $\Gamma$ be a unitrivalent graph and $j'$ be a 
boundary coloring of $E^u(\Gamma)$. 
Then the number of $k$-admissible colorings compatible with $j'$ 
is given by the following formula. 
$$
d_{g,n}(k;j'(\Gamma)):=
^{\#}QCG_k(\Gamma;j')=
\left(\frac{k+2}{2}\right)^{g-1}\sum_{\nu=1}^{k+1}
\frac{\prod_{f_l\in E^u(\Gamma)}\left({\sin\frac{j_l+1}{k+2}\nu\pi}\right)}
{\left(\sin\frac{\nu\pi}{k+2}\right)^{2g-2+n}} \ . 
$$

\medskip

\begin{proof}[Proof of Proposition~\ref{main2}]
By using the factorization property and 
the Verlinde formula, 
we have the following computation:
\begin{eqnarray*}
&&\sum_{j'(E^u_{\mu})}
(-1)^{\sum j'(E^u_{\mu})}
{^{\#}QCG_k(\Gamma'(\lambda)_{\mu};j'(\Gamma'(\lambda)_{\mu})))}\\
&=&\sum_{j'(E^u_{\mu})}
(-1)^{\sum j'(E^u_{\mu})}\left(\frac{k+2}{2}\right)^{g_2-m'-1}
\sum_{l=1}^{k+1}\frac{\prod_{E^t_{\lambda}\sqcup \overline{E^u_{\lambda}}}
\left(\sin\frac{j'_{\nu}+1}{k+2}l\pi\right)
\prod_{E^u_{\mu}}\left(\sin^2\frac{j^{'}_{\nu'}+1}{k+2}l\pi\right)}
{\left(\sin\frac{l\pi}{k+2}\right)^{2g_2-2+n}}\\  
&=&\left(\frac{k+2}{2}\right)^{g_2-m'-1}
\sum_{l=1}^{k+1}\frac{\prod_{E^t_{\lambda}\sqcup \overline{E^u_{\lambda}}}
\left(\sin\frac{j'_{\nu}+1}{k+2}l\pi\right)
\sum_{j'(E^u_{\mu})}
(-1)^{\sum j'(E^u_{\mu})}
\prod_{E^u_{\mu}}\left(\sin^2\frac{j^{'}_{\nu'}+1}{k+2}l\pi\right)}
{\left(\sin\frac{l\pi}{k+2}\right)^{2g_2+n-2}}\\
&=&\left(\frac{k+2}{2}\right)^{g_2-1}
\prod_{E^t_{\lambda}\sqcup \overline{E^u_{\lambda}}}
\sin\frac{j'_{\nu}+1}{2}\pi.
\end{eqnarray*}
Here we used the following elementary equalities 
in the last equal sign; 
\begin{eqnarray*}
&&\sum_{j'(E^u_{\mu})}(-1)^{\sum j'(E^u_{\mu})}
\prod_{E^u_{\mu}}\left(\sin^2\frac{j'_{\nu'}+1}{k+2}l\pi\right)\\
&=&\sum_{j'_1,\cdots,j'_m}
(-1)^{j'_1+\cdots+j'_m}\prod_{\nu'=1}^{m'}
\left(\sin^2\frac{j'_{\nu'}+1}{k+2}l\pi\right)
=\left\{\begin{array}{lll} \left(\frac{k+2}{2}\right)^{m'} \quad (l=\frac{k+2}{2})\\ 
 \\ 0 \quad (l\neq\frac{k+2}{2}).\end{array}\right.
\end{eqnarray*}

Then the number in the proposition 
can be computed as follows. 

\begin{eqnarray*}
&&\left(\frac{k+2}{2}\right)^{g_2-1}
\sum_{j'(E^t_{\lambda})}(-1)^{\sum j'(E^t_{\lambda})}
\prod_{E^t_{\lambda}\sqcup \overline{E^u_{\lambda}}}{\sin\frac{j'_{\nu}+1}{2}\pi}\\
&=&\left(\frac{k+2}{2}\right)^{g_2-1}
\prod_{\overline{E^u_{\lambda}}}{\sin\frac{j'_{\nu}+1}{2}\pi}
\left(\sum_{a=0}^k(-1)^{a}\sin\frac{a+1}{2}\pi\right)^m 
=\left(\frac{k+2}{2}\right)^{g_2-1+m}
\prod_{\overline{E^u_{\lambda}}}{\sin\frac{j'_l+1}{2}\pi}.
\end{eqnarray*}
\end{proof}

\begin{Rem}\label{characterization}
We can show that the external edge condition 
is a characterization of the Heisenberg action on $V_{2k+4}(C_{\Gamma})$ 
in the following sense. 
Let $\Gamma$ be a unitrivalent graph and $k$ a non-negative integer. 
Fix a boundary coloring $j'$ of $\Gamma$ and 
let $R_k(\Gamma;j')$ be the free $R_k$-module 
generated by the finite set $QCG_k(\Gamma;j')$, 
which is isomorphic to $V_{2k+4}(C_{\Gamma};j')$. 
Take a family of maps $\alpha=(\alpha_j:H_1(\Gamma;\Z/2)\to R_k)$ 
parameterized by $j\in QCG_k(\Gamma;j')$. 
We assume that $\alpha$ satisfies the {\it cocycle condition} 
$$
\alpha_j(\lambda_1+\lambda_2)=\alpha_{\lambda_2\cdot j}(\lambda_1)\alpha_j(\lambda_2)
\qquad  (j\in QCG_k(\Gamma;j'), \ \lambda_1,\lambda_2\in H_1(\Gamma;\Z/2)). 
$$
By using such $\alpha$ we can define a homomorphism 
\begin{eqnarray*}
\rho^{(k)}_{\alpha}:\E(\Gamma)&\to& GL(R_k(\Gamma;j'))\\ 
u^m\tau(\mu,\lambda)&\mapsto& 
\left(|j\rangle\mapsto 
A^{(k+2)^2(m+\mu\circ\lambda)}
(-1)^{(k+1)(m+\mu\circ\lambda)+j_\mu}\alpha_j(\lambda)|\lambda\cdot j\rangle\right).
\end{eqnarray*}
Our proof of Theorem~\ref{trace} shows that 
if $\alpha=(\alpha_j)$ satisfies the external edge condition, 
then the representation $\rho^{(k)}_{\alpha}$ on $R_k(\Gamma;j')$ is 
equivalent to the representation $\rho^{(k)}$ on $V_{2k+4}(C_{\Gamma};j')$. 
In a suitable categorical setting, 
we can show that the converse is true. 
See \cite{extcond} for detail. 
\end{Rem}

\section{Brick decomposition}

Our trace formula (Theorem~\ref{trace}) 
enable one to describe the dimension 
of the direct summands of canonical decomposition of 
the TQFT-module $V_{2k+4}(C)$ associated with the $\E(C)$-action. 
Such a decomposition is called the {\it brick decomposition} in \cite{AndMas}. 

\medskip

\noindent
{\bf Theorem.} ([{\bf 5}, Theorem~1.6 ]) \ 
{\it
Let $(C,l,c)$ be a surface with colored structure. 

\begin{itemize}
\item[(i)]
If $k\equiv 0\bmod 4$, 
then the action of $\E(C)$ decomposes $V_{2k+4}(C)$ into a
direct sum of subspaces $V_{2k+4}(C,h)$, canonically associated with 
cohomology classes $h\in H^1(C-l;\Z/2)$. 

\item[(ii)]
If $k\equiv 2\bmod 4$, 
then the action of $\E(C)$ decomposes $V_{2k+4}(C)$ into a
direct sum of subspaces $V_{2k+4}(C,q)$, canonically associated with 
spin structures $q$ on  $C- l$. 
\end{itemize}
}

These subspaces are defined as follows; 
$$
V_{2k+4}(C,h)=\{v\in V_{2k+4}(C) \ | \ \rho^{(k)}(\tau(a))v=h(a)v \ 
{\rm for \ all} \ a\in H_1(C- l;\Z/2)\} 
$$
$$
V_{2k+4}(C,q)=\{v\in V_{2k+4}(C) \ | \ \rho^{(k)}(\tau(a))v=q(a)v \ 
{\rm for \ all} \ a\in H_1(C- l;\Z/2)\}. 
$$
Here we used identifications 
$H^1(C- l;\Z/2)=Hom(H_1(C- l;\Z/2),\Z/2)$ and 
$Spin(C- l)$ with the set of quadratic forms on 
$H_1(C- l;\Z/2)$ inducing the intersection form. 

\begin{Rem}
They also showed in \cite{AndMas} and \cite{BHMV} that 
if $k$ is odd, then 
$V_{2k+4}(C)$ is isomorphic 
to the tensor product of certain lower level modules 
as a representation of $\E(C)$. 
\end{Rem}

In \cite{AndMas} and \cite{BHMV} they gave the dimension formula for submodules 
$V_{2k+4}(C,h)$ and $V_{2k+4}(C,q)$ 
for a closed surface equipped with the empty link. 
Theorem~\ref{trace} tells us that the similar formulas 
hold for a surface with non-empty link. 
We demonstrate the computation as follows. 
The computation itself is the almost same as in \cite{AndMas}
except the consideration of the boundary. 

\medskip

Now we start from a ribbon unitrivalent graph $\Gamma$. 
Then we have the associated closed surface $C=C_{\Gamma}$ 
and the isomorphism $H_1(C^{\circ};\Z/2)=\Lmd\oplus\lmd$, 
where $C^{\circ}$ is the compact surface obtained by 
removing open discs around the banded link in $C$. 

\begin{Rem}
As it is noted in \cite{BHMV}, 
the action of $\mu=\sum\epsilon_le_l\in\Lambda_0^u/2\Lambda^u$ 
represented by boundary circles 
is given by scalar multiplication of $(-1)^{\sum\epsilon_lj'_l}$, 
and hence $V_{2k+4}(C,h)$ (resp. $V_{2k+4}(C,q)$) is 
zero except if $h(e_l)=(-1)^{j_l'}$ (resp. $q(e_l)=(-1)^{j_l'}$) 
for all boundary circles $e_l$. 
We denote by $H^1(C;j')$ (resp. $Spin(C;j')$) the set of 
cohomology classes (resp. spin structures) on $C-l$ 
satisfying $h(e_l)=(-1)^{j_l'}$ (resp. $q(e_l)=(-1)^{j_l'}$) 
for all $e_l\in \Lambda_0^u/2\Lambda^u$. 
\end{Rem}

\medskip

\noindent
{\bf The case $k\equiv 0$ mod $4$.}

\medskip

We have a direct sum decomposition 
$$
V_{2k+4}(C)=V_{2k+4}(C;j')=\bigoplus_{h}V_{2k+4}(C,h;j'). 
$$
The trace formula asserts that the trace takes the same value for all 
nontrivial non-boundary element in $H_1(C^{\circ};\Z/2)$, and is therefore 
invariant under the automorphism group of $H_1(C^{\circ};\Z/2)$. 
One can check that the action of automorphism group 
on $H^1(C;j')$ is transitive. 
These imply that the dimension of $V_{2k+4}(C:h)$ is same for 
all non-trivial $h\in H^1(C;j')$. 
We denote this dimension by $d_{g,n}^{(1)}(k;j')$, and 
we put $d_{g,n}^{(0)}(k;j'):=\dim V_{2k+4}(C,0;j')$. 
These numbers can be computed by the following two equalities; 
\begin{eqnarray*}
d_{g,n}(k;j')&=&d_{g,n}^{(0)}(k;j')+(2^{2g}-1)d_{g,n}^{(1)}(k;j')\\ 
\gamma(j')\left(\frac{k+2}{2}\right)^{g-1}&=&
Tr(\tau_a)=d_{g,n}^{(0)}(k;j')-d_{g,n}^{(1)}(k;j') \quad (a\notin \Lambda^u_0/2\Lambda^u). \\ 
\end{eqnarray*}
Note that $^{\#}H^1(C;j')=2^{2g}$ and 
$d_{g,n}(k;j')=\dim V_{2k+4}(C;j')$ is the Verlinde number. 
In this way for $k\equiv 0\bmod 4$, one has that  
\begin{eqnarray*}
d_{g,n}^{(0)}(k;j')&=&\frac{1}{2^{2g}}
\left(d_{g,n}(k;j')+(2^{2g}-1)\gamma(j')\left(\frac{k+2}{2}\right)^{g-1}\right) \\
d_{g,n}^{(1)}(k;j')&=&\frac{1}{2^{2g}}
\left(d_{g,n}(k;j')-\gamma(j')\left(\frac{k+2}{2}\right)^{g-1}\right). \\
\end{eqnarray*}

\medskip

\noindent
{\bf The case of $k\equiv 2$ mod $4$.}

\medskip

We have a direct sum decomposition 
$$
V_{2k+4}(C)=V_{2k+4}(C;j')=\bigoplus_{q}V_{2k+4}(C,q;j'). 
$$
As in the same way for $k\equiv 0\bmod 4$, 
the trace formula asserts that the trace is 
invariant under the automorphism group of $H_1(C^{\circ};\Z/2)$. 
Fix a decomposition $H_1(C^{\circ};\Z/2)=\Lambda_0^u/2\Lambda^u
\oplus \left(\Lambda_0^u/2\Lambda^u\right)^{\bot}$, 
$\left(\Lambda_0^u/2\Lambda^u\right)^{\bot}:=
H_1(C^{\circ};\Z/2)/\left(\Lambda_0^u/2\Lambda^u\right)$. 
According to this decomposition we have the decomposition 
$q(a)=q_1(a_1)q_2(a_2)$, for $a=(a_1,a_2)\in H_1(C^{\circ};\Z/2)$. 
Since it is enough to consider for 
elements in $Spin(C;j')$, we may assume that the map $q_1$ is equal to the map 
$\sum\varepsilon_le_l\mapsto (-1)^{\sum\varepsilon_lj'_l}$. 
On the other hand there is a well-defined {\it Arf invariant} Arf$(q_2)\in \Z/2$, 
which parameterizes isomorphism classes of quadratic forms. 
In this way one knows that the dimension of $V_{2k+4}(C,q;j')$ 
depends only on Arf$(q):=$Arf$(q_2)$. 
For $\varepsilon=+$ or $-$, 
we put $d_{g,n}^{\varepsilon}(k;j'):=\dim V_{2k+4}(C,q_{\varepsilon};j')$, 
where $q_{\varepsilon}$ has Arf invariant $\varepsilon 1\in \{\pm1\}$. 
These numbers can be computed by the following two equalities; 

\begin{eqnarray*}
d_{g,n}(k;j')&=&2^{g-1}(2^g+1)d_{g,n}^{+}(k;j')+2^{g-1}(2^g-1)d_{g,n}^{-}(k;j')\\ 
\gamma(j')\left(\frac{k+2}{2}\right)^{g-1}&=&
Tr(\tau(a))=2^{g-1}(d_{g,n}^{+}(k;j')-d_{g,n}^{-}(k;j')) 
\quad (a\notin \Lambda^u_0/2\Lambda^u). \\ 
\end{eqnarray*}

Here we used the facts that for $a\notin \Lambda^u_0/2\Lambda^u$ and 
$\varepsilon=+,-$, one has 
\begin{itemize}
\item ${^\#}\{q_2:\left(\Lambda_0^u/2\Lambda^u\right)^{\bot}\to\Z/2 
\ | \ {\rm Arf}(q_2)=0\}=2^{g-1}(2^{g}-1)$
\item ${^\#}\{q_2:\left(\Lambda_0^u/2\Lambda^u\right)^{\bot}\to\Z/2 
\ | \ q_2(a)=-1, {\rm Arf}(q_2)=\varepsilon\}=2^{2g-2}$.
\end{itemize}

In this way for $k\equiv 2\bmod 4$, one has that  
$$
d_{g,n}^{\varepsilon}(k;j')=
\frac{1}{2^{2g}}\left(d_{g,n}(k;j')+(\varepsilon 2^g-1)
\gamma(j')\left(\frac{k+2}{2}\right)^{g-1}\right). 
$$

\medskip 

{Acknowledgments.} 
The author is grateful to G.~Masbaum, 
who explained some details 
about skein theoretical computations to the author 
and pointed out that the description of coefficients $\delta_j$
could be simplified. 
The author would like to acknowledge CTQM, 
especially the director J.~E.~Andersen, 
for their hospitality, 
where the author attended the lecture by G.~Masbaum. 
The author is indebted to M.~Furuta 
for his encouragements.

\end{document}